\documentclass[11pt]{article}
\usepackage{amsfonts,amsmath,tikz}
\usepackage{epsfig}
\usepackage{latexsym}
\usepackage{color}
\usepackage{epsf,amssymb}

\usepackage{amsthm}
\usepackage{yfonts}
\usepackage{hyperref}
\usepackage{ifthen}

\textwidth=6.0in \textheight=8.5in \evensidemargin=0in
\oddsidemargin=0in \topmargin=0in \topskip=0pt \baselineskip=12pt
\parindent=2em

\newtheorem{theorem}{Theorem}
\newtheorem{lemma}[theorem]{Lemma}
\newtheorem{observation}[theorem]{Observation}
\newtheorem{conjecture}[theorem]{Conjecture}

\newtheorem{cor}[theorem]{Corollary}
\newtheorem{proposition}[theorem]{Proposition}
\newtheorem{question}{Question}

\def\vertex(#1){\put(#1){\circle*{2}}}
\def\vertexo(#1){\put(#1){\circle{2}}}
\def\vert(#1){\put(#1){\circle*{1.5}}}
\def\verto(#1){\put(#1){\circle{1.5}}}
\def\lab(#1)#2{\put(#1){\makebox(0,0)[c]{#2}}}
\setlength{\unitlength}{1mm}

% Now use official proof context.  So the next were already defined there.
%\newenvironment{proof}[1][Proof]{\textbf{#1.} }{\ \rule{0.5em}{0.5em}}
%\newcommand{\qed}{\hfill $\Box$}

\newcommand{\cart}{\, \Box \,}

\newcommand{\gt}{\gamma_t}

\newcommand{\rk}{\gamma_{{\rm r}k}}

\newcommand{\cp}{\,\square\,}

\newcommand{\krt}{\gamma_{k{\rm rt}}}
\newcommand{\trt}{\gamma_{2{\rm rt}}}

\newcommand{\Sgraph}{\mathcal{T}}

\tikzset{My Style/.style={draw, circle, fill=black,scale=0.3}} %za risanje grafov-stil vozlišč

\title{Bounding the $k$-rainbow total domination number}

\author{
Kerry Ojakian\\
\small \it Department of Mathematics and Computer Science\\ %[-0.8ex]
\small \it Bronx Community College (CUNY)\\ %[-0.8ex]
\small \it Bronx, NY, U.S.A\\
\small \tt kerry.ojakian@bcc.cuny.edu \\
\and
Riste \v Skrekovski \\
\small \it FMF, University of Ljubljana, 1000 Ljubljana, Slovenia \\
\small \it Faculty of Information Studies, 8000 Novo Mesto, Slovenia \\
\small \tt skrekovski@gmail.com  \\
\and
 Aleksandra Tepeh \\
\small \it University of Maribor, FERI, 2000 Maribor, Slovenia \\
\small \it Faculty of Information Studies, 8000 Novo Mesto, Slovenia \\
\small \tt aleksandra.tepeh@um.si \\
}

%%%%%%%%%%%%
\begin{document}
\maketitle

\begin{abstract}
Recently the notion of $k$-rainbow total domination was introduced for a graph $G$, motivated by a desire to reduce the problem of computing the total domination number of the generalized prism $G \cart K_k$ to an integer labeling problem on $G$.  In this paper we further demonstrate usefulness of the labeling approach, presenting bounds on the rainbow total domination number in terms of the total domination number, the rainbow domination number and the rainbow total domination number, as well as the usual domination number, where the latter presents a generalization of a result by Goddard and Henning (2018). We establish Vizing-like results for rainbow domination and rainbow total domination. By stating a Vizing-like conjecture for rainbow total domination we present a different viewpoint on Vizing's original conjecture in the case of bipartite graphs.
\end{abstract}

\noindent
{\bf Keywords:} graph theory, domination, total domination, rainbow domination, Vizing's Conjecture

%-------------------------------------------
\section{Introduction and preliminaries}

\newcommand{\new}[1]{{\textcolor{red}{#1}}}

Domination is a topic in graph theory with extensive research activity. Already in 1998,
a classic book \cite{fund-1998} by Haynes et al. surveyed over 1200 papers. It is not surprising that this number is increasing rapidly as domination presents one of the most applicable branches of graph theory. For example, graphs can be used to model locations which 
exchange some resource along its edges.  In such an application,
ordinary domination is an optimization
problem to determine the minimum number of locations necessary in order for each location to contain the resource
or be adjacent to a location containing the resource.
However, there are situations where certain additional requirements must be fulfilled, thus
numerous variations of domination, motivated by real life as well as theoretical applications,
developed over the time. In this paper we study the recently introduced concept of $k$-rainbow total domination, compare it with known domination parameters, and show that it leads to an interesting viewpoint on the famous Vizing conjecture.

We begin with some general definitions and notation for graphs, followed by various definitions of domination in graphs.
Graphs considered in this paper are finite, simple and undirected.  For a graph $G$, we let $V(G)$ denote its set of vertices and $E(G)$ denote its set of edges. 
For a graph $G$, let $N_G(v)$ be the open neighborhood of vertex $v$ in graph $G$, that is the vertices in $G$ which are adjacent to $v$.  When $G$ is apparent from context, we may just write $N(v)$. The closed neighborhood $N_G[v]$ is $N_G(v)\cup\{v\}$.
For graphs $G$ and $H$, the Cartesian Product $G\cart H$ is the graph with vertex set $V (G)\times V (H)$. Vertices $(g, h)$ and $(g',h')$ are adjacent in $G\cart H$ if and only if either $g = g'$ and $hh'\in E(H)$ or $h = h'$ and $gg'\in E(G)$.
For an integer $k$, we let $[k]$ refer to the set $\{1,2,\ldots,k\}$, and sometimes refer to the elements of $[k]$ as colors. 
We denote the power set of $[k]$ by $2^{[k]}$.
%It is known that the Cartesian product is associative and commutative, up to isomorphism.

A dominating set of a graph $G$ is a subset $D$ of $V(G)$ such that every vertex not in $D$ is adjacent to some vertex in $D$. The \textit{domination number}, $\gamma(G)$, is the minimum cardinality of a dominating set of $G$. If every vertex of $G$ is adjacent to a vertex in $D$, then $D$ is called a \textit{total dominating set} of $G$, and the minimum cardinality of a total dominating set of $G$ is the \textit{total domination number}, $\gt(G)$, \cite{HenSur09}
 
One of the well studied domination invariants is $k$-rainbow domination, introduced by Bre\v{s}ar, Henning and Rall~\cite{bhr-2008}.
For a positive integer $k$, a \textit{$k$-rainbow dominating function} (or $k$RDF, for short) of a graph $G$ is a function 
$f : V(G) \rightarrow 2^{[k]}$, such that for any vertex $v$ with $f(v)=\emptyset$ we have $\bigcup_{u\in N(v)} f(u) = [k]$. Let $||f||=\sum_{v\in V(G)}|f(v)|$; we refer to $||f||$ as the \emph{weight} of $f$.
The \textit{$k$-rainbow domination number}, $\rk(G)$, of a graph $G$ is the minimum value of $||f||$ over all $k$-rainbow dominating functions of $G$.   A $k$RDF of weight $\rk(G)$ is called a $\rk$-function.
For example, $\gamma_{r2}(C_4) = 2$ since an optimal choice is to assign $\{1\}$ to one vertex $v$ and assign $\{ 2 \}$ to the vertex not adjacent to $v$ in $C_4$.  It was observed in \cite{bhr-2008}  that for all $k\geq 1$, 
$$\gamma_{rk}(G)=\gamma (G \cart K_k).$$
Since the seminal paper \cite{bhr-2008} introduced this invariant, it has been studied extensively, for example: 
its algorithmic properties \cite{chang10,greedy}, relevant graph operations and families \cite{btks-2007,TKSRT-lex}, and general properties \cite{philip,filo,wx-2010}.
%The definition of the $k$-rainbow domination number is motivated by wanting to understand domination in the generalized prism $G \cart K_k$; in \cite{bhr-2008} it was observed that for all $k\geq 1$, $$\gamma_{rk}(G)=\gamma (G \cart K_k).$$

A \textit{$k$-rainbow total dominating function} $f$ of a graph $G$ (a $k$RTDF for short) was introduced in \cite{San19} as a $k$-rainbow dominating function satisfying an additional condition that for every $v\in V(G)$ such that $f(v)=\{i\}$ for some $i\in [k]$, there exists some
$u\in N(v)$ such that $i\in f(u)$. The weight of a $k$RTDF is as in the case of a $k$RDF,  $||f||=\sum_{v\in V(G)}|f(v)|$.
The minimum weight of a $k$RTDF is called the {\em $k$-rainbow  total domination number of $G$}, $\krt(G)$. A $k$RTDF of weight $\krt(G)$ is called a \textit{$\krt$-function}. 
For example, $\gamma_{2rt}(C_4) = 4$ since an optimal choice is to pick two vertices and assign each $\{1,2\}$; another optimal choice simply assigns $\{ 1 \}$ to every vertex.
The definition of the $k$-rainbow total domination number is motivated by wanting to understand total domination in the generalized prism; in
\cite{San19} it was observed that for all $k \ge 1$,
\begin{equation} \label{cpt}
\krt(G)=\gt(G \cart K_k).
\end{equation}

The main point of this paper is to compare the $k$-rainbow total domination number to other notions of domination, in particular to usual domination (i.e~$\gamma$), to total domination  (i.e.~$\gamma_t$), and to $k$-rainbow domination  (i.e.~$\gamma_{rk}$).  In Section~\ref{sec_itself} we calculate some domination numbers of the complete bipartite graph $K_{a,b}$ and a variant of $K_{a,b}$.  The reason for these calculations is mostly for showing the tightness of bounds in subsequent sections.  In Section~\ref{sec_k_total} we bound the $k$-rainbow total domination number in terms of the other kinds of domination numbers.  In Section~\ref{sec_dom} we obtain a lower bound on the $k$-rainbow total domination number in terms of the usual domination number, a generalization of the Goddard and Henning~\cite{GodHen18} result which applied to the case of $k = 2$.  We also investigate the tightness of this lower bound and consider what happens when the graph is bipartite.
In Section~\ref{sec_vizing}  we consider Vizing's 1968 conjecture that for any graphs $G$ and $H$,
        \begin{equation}
				\gamma(G) \, \cdot \, \gamma(H) \le  \gamma(G\cp H).
				\end{equation}
Variations on the Vizing conjecture have been considered by other authors, for example, Ho~\cite{Ho08} in the case of total domination, and 
Bre\v{s}ar et al.~\cite{viz} and Pilipczuk et al.~\cite{philip} in the case of $k$-rainbow domination.
We investigate what happens in the case of $k$-rainbow total domination, proving a simple Vizing-like result, and discussing a stronger Vizing-like conjecture, which in the case of bipartite graphs coincides with the original one.

\section{Special graph classes}
\label{sec_itself}

In this section we compute some domination invariants for $K_{a,b}$ and a related graph we call $K^+_{a,b}$.
Some of the results of this section are interesting in their own right, however, the results of this section will be used in later sections, mostly to demonstrate that certain bounds are tight.
In many calculations we use an alternative way to measure the weight of a $k$-rainbow dominating function $f$: 
$$||f|| = \sum_{S \subseteq [k]}| S|\,|V_S|\,$$
where $V_S=\{x \in V(G)\,:\,f(x)=S\}$.
If a vertex $v$ is assigned a set $S$ by the function $f$, we may refer to $S$ as the \emph{label} of $v$, and when clear from context we may abbreviate labels by removing the set braces, e.g. we would typically abbreviate the label $\{1,2\}$ by just $12$. A vertex $v$ with $f(v)=\emptyset$ will be referred to as \emph{empty vertex\emph}.

For integers $a$ and $b$ such that $a,b \ge 1$, let $K_{a,b}$ be the complete bipartite graph where one of the partite sets, say $A$, consists of vertices $x_1,x_2,\ldots, x_a$, and $B$ is the other partite set. 
We define the graph $K^+_{a,b}$ to be $K_{a,b}$ with 
new vertices $y_1,y_2,\ldots, y_a$ and new edges $x_iy_i$ for every $i\in [a]$; see Figure~\ref{Kab+}.

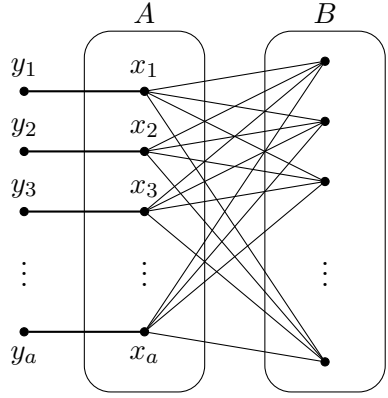
\begin{figure}[ht!]
\begin{center}
\begin{tikzpicture}[scale=0.8]

\draw[black,rounded corners=10]
     (0,0) rectangle (2,6);
		\draw (1,6.3) node {$A$};

\draw[black,rounded corners=10]
     (3,0) rectangle (5,6);
		\draw (4,6.3) node {$B$};

\draw (-1,2.1) node {$\vdots$};
\draw (1,2.1) node {$\vdots$};
\draw (4,2.1) node {$\vdots$};

\node [My Style, name=a, label=below:$x_a$ ]   at (1,1) {};		
\node [My Style, name=b, label=above:$x_3$]   at (1,3) {};	
\node [My Style, name=c, label=above:$x_2$]    at (1,4) {};	
\node [My Style, name=d, label=above:$x_1$]   at (1,5) {};	

\node [My Style, name=a1, label=below:$y_a$ ]   at (-1,1) {};		
\node [My Style, name=b1, label=above:$y_3$]   at (-1,3) {};	
\node [My Style, name=c1, label=above:$y_2$]    at (-1,4) {};	
\node [My Style, name=d1, label=above:$y_1$]   at (-1,5) {};

\draw[thick] (a) -- (a1);
\draw[thick] (b) -- (b1);
\draw[thick] (c) -- (c1);
\draw[thick] (d) -- (d1);

\node [My Style, name=a2]   at (4,0.5) {};		
\node [My Style, name=b2]   at (4,3.5) {};	
\node [My Style, name=c2]   at (4,4.5) {};	
\node [My Style, name=d2]   at (4,5.5) {};

\draw[thin] (a) -- (a2);
\draw[thin] (a) -- (b2);
\draw[thin] (a) -- (c2);
\draw[thin] (a) -- (d2);

\draw[thin] (b) -- (a2);
\draw[thin] (b) -- (b2);
\draw[thin] (b) -- (c2);
\draw[thin] (b) -- (d2);

\draw[thin] (c) -- (a2);
\draw[thin] (c) -- (b2);
\draw[thin] (c) -- (c2);
\draw[thin] (c) -- (d2);

\draw[thin] (d) -- (a2);
\draw[thin] (d) -- (b2);
\draw[thin] (d) -- (c2);
\draw[thin] (d) -- (d2);
		
\end{tikzpicture}
\end{center}
\caption{The graph $K^+_{a,b}$.}
\label{Kab+}
\end{figure}

\begin{lemma} \label{thm_Kplus} $\gamma(K^+_{a,b}) = a$  and   $\gamma_{k\rm{rt}}(K^+_{a,b}) = 2a$ for $a,b\ge 1$ and $2 \le k \le a$.
\end{lemma}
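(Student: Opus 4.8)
The plan is to prove both equalities by matching upper and lower bounds, treating $\gamma$ and $\krt$ separately. For the domination number I would first note that $A=\{x_1,\dots,x_a\}$ is a dominating set: each $x_i$ dominates its pendant $y_i$, and since $K_{a,b}\subseteq K^+_{a,b}$ is complete bipartite, every vertex of $B$ is adjacent to all of $A$. This gives $\gamma(K^+_{a,b})\le a$. For the matching lower bound I would use the pendants: each $y_i$ has $x_i$ as its unique neighbor, so any dominating set must contain $x_i$ or $y_i$; as the pairs $\{x_i,y_i\}$ are pairwise disjoint, at least $a$ vertices are forced, so $\gamma(K^+_{a,b})\ge a$.

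For $\krt$ I would work directly from the labeling definition of a $k$RTDF rather than through the prism identity \eqref{cpt}. For the upper bound $\krt(K^+_{a,b})\le 2a$ I would exhibit a $k$RTDF of weight $2a$ that puts all its weight on the pendant edges. Choose colors $c_1,\dots,c_a\in[k]$ with $\{c_1,\dots,c_a\}=[k]$ — possible precisely because $k\le a$ — and set $f(x_i)=f(y_i)=\{c_i\}$ and $f\equiv\emptyset$ on $B$. Each singleton vertex $x_i$ (resp.\ $y_i$) has its color supported by its partner $y_i$ (resp.\ $x_i$), so the total condition holds, and each empty vertex of $B$ sees the whole set $A$, whose labels union to $[k]$, so the rainbow condition holds. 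The weight is exactly $2a$.

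For the lower bound I would show that every $k$RTDF $f$ satisfies $|f(x_i)|+|f(y_i)|\ge 2$ for each $i$, by a short case analysis on $f(y_i)$: if $f(y_i)=\emptyset$, the rainbow condition forces $f(x_i)=[k]$, so $|f(x_i)|\ge k\ge 2$; if $|f(y_i)|\ge 2$ we are done immediately; and if $f(y_i)=\{c\}$ is a singleton, the total condition forces $c\in f(x_i)$ (since $x_i$ is the only neighbor of $y_i$), giving $|f(x_i)|+|f(y_i)|\ge 1+1=2$. Since the $2a$ vertices $x_1,y_1,\dots,x_a,y_a$ are distinct and the $B$-vertices contribute nonnegatively, summing over $i$ yields $\|f\|\ge 2a$.

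The argument is elementary; the only real care is in tracking where the two hypotheses on $k$ enter. The constraint $k\le a$ is exactly what lets the $a$ labels on $A$ cover all $k$ colors in the upper-bound construction, while $k\ge 2$ is exactly what makes the empty-pendant case of the lower bound contribute $2$ rather than $1$ (for $k=1$ the value drops, which is why the statement excludes it). The main obstacle is therefore not any single computation but the bookkeeping confirming that $2a$ is simultaneously achievable and forced under $2\le k\le a$.
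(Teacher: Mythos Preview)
Your proposal is correct and follows essentially the same approach as the paper: the paper also uses $A$ as the witnessing dominating set, the pendant pairs for the lower bound on $\gamma$, the same coloring $f(x_i)=f(y_i)=\{c_i\}$ for the $\krt$ upper bound, and the observation that each pair $x_i,y_i$ contributes at least $2$ for the lower bound. Your version is a bit more explicit---you spell out the three-case analysis on $f(y_i)$ that the paper leaves implicit, and you identify precisely where the hypotheses $k\le a$ and $k\ge 2$ enter---but there is no substantive difference in strategy.
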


\begin{proof}
The set $A$ is clearly a dominating set of $K^+_{a,b}$, thus $\gamma(K^+_{a,b}) \leq a$. On the other hand, every dominating set of $K^+_{a,b}$ must contain either $y_i$ or $x_i$ for every $i\in [a]$, thus $\gamma(K^+_{a,b}) \geq a$, which proves the first equality. 

Now let $f:V(K^+_{a,b})\rightarrow 2^{[k]}$ be a function defined by $f(v)=\emptyset$ for every $v\in B$, and for every $i \in [a]$, $f(x_i)=f(y_i)=\{j\}$ for 
some $j\in [k]$, so that for every $j\in [k]$ there exists $i$ with $f(x_i)=\{j\}$. Clearly $f$ is a $k$RTDF, thus $\gamma_{k\rm{rt}}(K^+_{a,b})\leq 2a$. Finally, note that $\gamma_{k\rm{rt}}(K^+_{a,b})$ cannot be less than $2a$, since each pair of vertices $x_i,y_i$ contributes at least $2$ to the weight of any $k$RTDF of $K^+_{a,b}$.
\end{proof}

\begin{lemma} \label{lem-Kab-2k}
Suppose $a, b,$ and $k$ are positive integers, $k\geq 1$ and $a\leq b$.  If $a + b \le k$, then 
$\rk(K_{a,b}) = a + b$.  If $a + b > k$, then we have

$$\rk(K_{a,b}) = 
\begin{cases}
2k, & \hbox{if $a \ge 2k$} \\
\hbox{$\max\{a,k\}$,} & \hbox{if $a < 2k$.} \\
\end{cases}
$$ 

\end{lemma}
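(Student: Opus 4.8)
The plan is to work directly with the structure of $K_{a,b}$. For a $k$RDF $f$, split the weight as $\|f\| = w_A + w_B$, where $w_A = \sum_{x\in A}|f(x)|$ and $w_B = \sum_{y\in B}|f(y)|$, and write $U_A = \bigcup_{x\in A} f(x)$ and $U_B = \bigcup_{y\in B} f(y)$. Since $K_{a,b}$ is complete bipartite, the neighborhood of every vertex of $A$ is exactly $B$ and vice versa, so the rainbow condition says precisely: if $A$ contains an empty vertex then $U_B = [k]$, and if $B$ contains an empty vertex then $U_A=[k]$. I would record three elementary inequalities to use throughout: $w_X \ge |U_X|$ for $X\in\{A,B\}$ (a union is no larger than the total multiplicity); if every vertex of $X$ is nonempty then $w_X \ge |X|$; and, combining the rainbow condition with the first inequality, if $X$ contains an empty vertex then the opposite side has weight at least $k$.

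For the upper bounds I would exhibit one construction per branch of the formula. When $a + b \le k$, give every vertex a nonempty label: no vertex is empty, the condition is vacuous, and the weight is $a+b$. When $a + b > k$ and $a < 2k$, leave all of $B$ empty and label the $a$ vertices of $A$ so that each is nonempty and $U_A = [k]$; this is achievable with weight exactly $\max\{a,k\}$ (single colors padded by repeats when $a\ge k$, or packing the $k$ colors onto the $a$ vertices when $a<k$), and it is a valid $k$RDF because every empty vertex of $B$ sees $U_A=[k]$. When $a \ge 2k$, cover $[k]$ on each side with weight $k$ while leaving at least one empty vertex on each side (possible since $a,b \ge 2k \ge 2$); the empties on each side are dominated by $[k]$ on the other, giving weight $2k$.

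For the lower bounds I would run a single four-way case analysis on whether each of $A$ and $B$ contains an empty vertex. If both do, then $U_A=U_B=[k]$ forces $w_A,w_B\ge k$, so $\|f\|\ge 2k$. If exactly one side, say $A$, is entirely nonempty while the other has an empty vertex, then $U_A=[k]$ together with $w_A\ge a$ gives $w_A\ge\max\{a,k\}$; the symmetric subcase uses $w_B\ge\max\{b,k\}\ge\max\{a,k\}$ since $b\ge a$. If neither side has an empty vertex, then $\|f\|\ge a+b$. I would then read off each branch: for $a+b\le k$ every case gives weight $\ge a+b$ (directly in the all-nonempty case, and $\ge k\ge a+b$ whenever an empty vertex occurs); for $a\ge 2k$ every case gives $\ge 2k$ (here $a+b\ge a\ge 2k$ settles the all-nonempty case); and for $a+b>k$, $a<2k$ every case gives $\ge\max\{a,k\}$ (using $a+b>k$ to clear the all-nonempty case).

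The routine part is the arithmetic of matching cases to branches. The step needing the most care is the upper-bound construction for $a<2k$, where I must verify both that weight $\max\{a,k\}$ genuinely suffices to make $U_A=[k]$ with no empty vertex in $A$, and that the feasibility conditions (enough vertices to simultaneously cover all colors and leave empties in the $a\ge 2k$ construction) actually hold. The only conceptual obstacle, such as it is, is recognizing at the outset that the relevant data is just the pair of flags ``does this side have an empty vertex,'' after which both bounds reduce to bookkeeping.
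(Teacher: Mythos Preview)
Your proposal is correct and follows essentially the same approach as the paper: reduce the rainbow condition on $K_{a,b}$ to ``an empty vertex on one side forces $U$ on the other side to equal $[k]$,'' then run a case split on which sides contain empty vertices, and pair this with the obvious constructions. Your presentation is somewhat more streamlined---you set up the four-way case analysis once and then read off all three parameter branches uniformly---whereas the paper treats the branches one at a time and uses a small contradiction argument for the subcase $k<a<2k$; but the underlying observations and constructions are the same.
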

\begin{proof}

Let $\{A,B\}$ be the bipartition of $V(K_{a, b})$ with $A=\{x_1,x_2, \ldots, x_a\}$ and $B=\{y_1,y_2, \ldots, y_b\}$, where all the edges are between $A$ and $B$.
We consider the following cases.

\begin{itemize}

\item
If $a + b \le k$, the assertion holds by a simple property for a general graph $G$, that if $|V(G)|\leq k$, then $\rk(G)=|V(G)|$.

\item
Let $a + b > k$, $a \ge 2k$ and let $f:V(K_{a, b})\rightarrow 2^{[k]}$ be defined with $f(x_1)=f(y_1)=[k]$ and $f(v)=\emptyset$ for every $v\in V(K_{a, b})\setminus \{x_1,y_1\}$. Since $f$ is clearly a $k$RDF we have $\rk(K_{a,b})\leq 2k$. 
Now let $g$ be an arbitrary $\rk$-function of $K_{a, b}$. If both $A$ and $B$ contain an empty vertex, or neither of them contains such a vertex, we clearly have $||g||\geq 2k$. 
If exactly one of $A$ or $B$ contains an empty vertex, then $||g|| \ge a \ge 2k$, so $\rk(K_{a,b})\geq 2k$.

\item
Let $a + b > k$ and $a < 2k$. Then obviously $\rk(K_{a, b}) \ge k$. If $a\leq k$,  $f:V(K_{a, b})\rightarrow 2^{[k]}$ defined with $f(x_i)=\{i\}$ for $i\in [a-1]$, $f(x_a)=\{a,a+1,\ldots,k\}$ and $f(v)=\emptyset$ for every $v\in B$, is a $k$RDF. Therefore in this case we infer $\rk(K_{a,b})=k$. If $a>k$, then $f:V(K_{a, b})\rightarrow 2^{[k]}$ defined with $f(x_i)=\{i\}$ for every $i\in [k]$, $f(x_i)=\{1\}$ for every $i\in \{k+1,k+2,\ldots,a\}$, and $f(v)=\emptyset$ for every $v\in B$, is a $k$RDF, thus $\rk(K_{a, b}) \leq a$.
Assume for contradiction that $\rk(K_{a, b}) < a$, and let $g$ be an arbitrary $\rk$-function. Then there 
is an empty vertex in $A$ and in $B$
since $b\geq a>k$. Thus $||g||\geq 2k$, contradicting $||g|| < a < 2k$. Therefore $\rk(K_{a,b})=a$ if $a>k$.
\end{itemize}
\end{proof}

\begin{proposition}  \label{thm3}
Suppose $a, b,$ and $k$ are positive integers, $k\geq 2$ and $a\leq b$.  If $a + b \le k$, then 
$\krt(K_{a,b}) = a + b$. If $a + b > k$, then we have
$$\krt(K_{a,b}) = 
\begin{cases}
k, & \hbox{if $a \le \bigl \lfloor \frac{k}{2} \bigr \rfloor$} \\
a+\bigl \lceil \frac{k+1}{2}\bigr \rceil,  & \hbox{if $\bigl \lfloor \frac{k}{2} \bigr \rfloor < a <  \bigl \lceil \frac{3k-2}{2}   \rceil$} \\
2k, & \hbox{if $a \ge \bigl \lceil \frac{3k-2}{2}   \bigr \rceil$.} \\
\end{cases}
$$ 
\end{proposition}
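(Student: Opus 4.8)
The plan is to work throughout with the four quantities $S_A=\bigcup_{x\in A}f(x)$, $S_B=\bigcup_{y\in B}f(y)$, $w_A=\sum_{x\in A}|f(x)|$ and $w_B=\sum_{y\in B}|f(y)|$, so that $\|f\|=w_A+w_B$. Since $K_{a,b}$ is complete bipartite, the neighborhood of any vertex of $A$ is all of $B$ and vice versa, which gives two structural facts for a $k$RTDF $f$: (i) if $A$ has an empty vertex then $S_B=[k]$ (so $w_B\ge k$), and symmetrically for $B$; and (ii) if some $x\in A$ has $f(x)=\{j\}$ then $j\in S_B$, and symmetrically. I would dispose of $a+b\le k$ first using the general fact that $\krt(G)=|V(G)|$ whenever $G$ has no isolated vertex and $|V(G)|\le k$ (upper bound: assign $\{1\}$ everywhere; lower bound: $\krt(G)\ge\rk(G)=|V(G)|$). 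Writing $m=\lceil (k+1)/2\rceil$, I also record the universal bound $\krt\le 2k$, realized by assigning $[k]$ to $x_1$ and to $y_1$ and $\emptyset$ elsewhere.

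For the remaining upper bounds I would give one construction per regime. When $a\le\lfloor k/2\rfloor$ (equivalently $2a\le k$) I partition $[k]$ into $a$ blocks each of size at least $2$, place these blocks on the vertices of $A$, and leave $B$ empty; this creates no singletons, satisfies $S_A=[k]$ for the empty vertices of $B$, and has weight exactly $k$. When $a\ge m$ I aim for weight $a+m$: put $t=\lfloor (k-1)/2\rfloor$ ``big'' vertices in $A$ carrying a partition of $k-s$ colours into size-$2$ labels, and the remaining $a-t$ vertices of $A$ as singletons using only the other $s=k-2t$ colours (chosen so that $t+s=m$); finally put one label equal to this $s$-element colour set on a vertex of $B$ and leave the rest of $B$ empty. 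One checks $S_A=[k]$, mutual support of the $A$-singletons and the lone $B$-label via (ii), and weight $(a-t)+2t+s=a+m$. This requires $a-t\ge s$, i.e.\ $a\ge t+s=m$, which is exactly the hypothesis.

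The heart of the matter is the matching lower bound, obtained by splitting on which sides contain empty vertices. If both $A$ and $B$ have an empty vertex then $S_A=S_B=[k]$ and $\|f\|\ge 2k$; if neither does then $\|f\|\ge a+b$. The two mixed cases are symmetric, so take $A$ with no empty vertex and $B$ with one; then $S_A=[k]$ and each vertex of $A$ is a singleton or big. Letting $t$ be the number of big vertices of $A$ and $s$ the number of distinct colours occurring as singletons in $A$, fact (ii) gives $w_B\ge s$, the big vertices contribute weight at least $2t$, and $S_A=[k]$ forces them to realize at least $k-s$ colours, so
\[
\|f\|\ \ge\ (a-t)+\max\{2t,\,k-s\}+s\ =:\ g(t,s).
\]
The crucial computational step is to show $\min g=a+m$ whenever $a\ge m$: on $2t\ge k-s$ one gets $g=a+(t+s)$ and minimizes $t+s$ subject to $2t+s\ge k$ and $s\ge 1$, whose optimum is $m$ (the degenerate $s=0$ forces $t=a$, giving $g=2a\ge a+m$); on $2t<k-s$ one gets $g=a+(k-t)$ with $t\le\lceil k/2\rceil-1$, so $k-t\ge\lfloor k/2\rfloor+1=m$. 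Interchanging $A$ and $B$ handles the other mixed case and yields the larger bound $b+m\ge a+m$ (using $b\ge a\ge m$).

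Finally I would assemble the pieces. For $a\le\lfloor k/2\rfloor$ we have $\krt\ge\rk(K_{a,b})=\max\{a,k\}=k$ by Lemma~\ref{lem-Kab-2k}, matching the construction, so $\krt=k$. For $a\ge m$ the four cases give $\|f\|\ge\min\{2k,\,a+b,\,b+m,\,a+m\}=\min\{2k,\,a+m\}$, while the two constructions give $\krt\le\min\{2k,\,a+m\}$, hence $\krt=\min\{2k,a+m\}$; a short check that $2k-m=\lceil (3k-2)/2\rceil$ splits this into $a+m$ for $\lfloor k/2\rfloor<a<\lceil (3k-2)/2\rceil$ and $2k$ for $a\ge\lceil (3k-2)/2\rceil$. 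I expect the only genuinely delicate point to be the minimization of $g(t,s)$, and in particular the parity and feasibility bookkeeping (the $s\ge1$ constraint) which produces the bound $a+\lceil (k+1)/2\rceil$ rather than $a+\lceil k/2\rceil$: the extra unit for even $k$ is exactly the cost of supporting singleton colours on the opposite side.
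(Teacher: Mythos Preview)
Your proof is correct and follows the same overall architecture as the paper's: explicit constructions for the upper bounds in each regime, and a lower bound obtained by splitting on which of $A,B$ contain empty vertices (the four cases you list are exactly the four conditions the paper enumerates).

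The one substantive difference is how the key ``mixed'' case (no empty vertex in $A$, some empty vertex in $B$) is handled. The paper's Claim~3 argues by \emph{normalizing} a minimum-weight $k$RTDF---shuffling colours so that the non-singleton labels in $A$ partition $\{r+1,\dots,k\}$ and the singletons use $\{1,\dots,r\}$---and then counting, exploiting minimality to justify the modifications. You instead keep $f$ fixed, introduce the parameters $t$ (big vertices in $A$) and $s$ (distinct singleton colours in $A$), derive the pointwise bound $\|f\|\ge (a-t)+\max\{2t,\,k-s\}+s$, and minimize over feasible $(t,s)$. Your optimization approach is a bit more transparent about \emph{why} the bound is $a+\lceil(k+1)/2\rceil$ (the $s\ge1$ constraint is what forces the extra unit for even $k$), and it avoids any need to appeal to minimality of $f$; the paper's normalization is slightly shorter once one accepts the reductions. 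Both arrive at the same place. A minor further difference: for the regime $a\le\lfloor k/2\rfloor$ you invoke Lemma~\ref{lem-Kab-2k} to get $\krt\ge\rk=k$, whereas the paper uses the simpler observation that $a+b>k$ alone forces $\krt\ge k$.
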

% \lceil \frac{3k-2}{2}   \rceil =  2k-\lfloor \frac{k}{2}\rfloor-1  OLD is right side

\begin{proof} If $a + b \le k$ the assertion is immediate (see Observation 4 in \cite{San19}). Now let $a + b > k$ and let $\{A,B\}$ be the bipartition of $V(K_{a, b})$ with $A=\{x_1,x_2, \ldots, x_a\}$ and $B=\{y_1,y_2, \ldots, y_b\}$, where $a\leq b$, and  all the edges are between $A$ and $B$.

%CASE: $a \le \lfloor \frac{k}{2}\rfloor$
First we consider the case when $a \le \bigl \lfloor \frac{k}{2} \bigr \rfloor$. Since $a + b > k$, we clearly have $\krt(K_{a, b})\geq k$. Let $f:V(K_{a, b})\rightarrow 2^{[k]}$ be defined with $f(x_i)=\{2i-1,2i\}$ for every $i\in [a-1]$, $f(x_a)=\{2a-1,2a,\ldots, k\}$, and $f(y_i)=\emptyset$ for every $i\in [b]$. Since $f$ is a $k$RTDF and $||f||=k$ we also have $\krt(K_{a,b})\leq k$. Thus $\krt(K_{a,b})= k$ if $a \le \bigl \lfloor \frac{k}{2} \bigr \rfloor$.

%CASE $a > \lfloor \frac{k}{2}\rfloor$
\emph{For the remainder of the proof we assume that 
$a \ge \bigl \lfloor \frac{k}{2} \bigr \rfloor + 1 = \bigl \lceil \frac{k+1}{2} \bigr \rceil$.} 
Before we consider the remaining two cases for the value of $a$,
we prove the following three claims.

\medskip

%\textbf{Claim 1.} 
\textit{Claim 1. \ $\krt(K_{a,b})\leq 2k$}.

\medskip

Let $f:V(K_{a, b})\rightarrow 2^{[k]}$ be defined with 
$f(x_1)=f(y_1)=[k]$ and $f(v)=\emptyset$ for every $v \in V(K_{a,b})\setminus \{x_1,y_1\}$. It is straightforward to see that $f$ is a  $k$RTDF, proving Claim 1.

\medskip

%\textbf{Claim 2.} 
\textit{Claim 2. \ $\krt(K_{a,b})\leq a+ \bigl \lceil \frac{k+1}{2}  \bigr \rceil$}.

\medskip 

Let $f:V(K_{a, b})\rightarrow 2^{[k]}$ be defined with 
 $f(x_i)=\{2i-1,2i\}$ if $i\in \{1,2,\ldots, \bigl \lfloor \frac{k}{2} \bigr \rfloor\}$, 
$f(x_i)=\{k\}$ if $i\in \{ \bigl \lfloor \frac{k}{2} \bigr \rfloor +1, \ldots, a\}$,  $f(y_1)=\{k\}$ and $f(y_i)=\emptyset$ for $i \in \{2,3,\ldots,b\}$. %See Figure~\ref{Kkb} where the cases $k=5$ and $k=6$ are depicted.
It is easy to see that $f$ is a $k$RTDF. Thus 
$$
\krt(K_{a, b}) \le ||f|| = 2\cdot \Bigl\lfloor \frac{k}{2}  \Bigr\rfloor+ 1 \cdot \left(a-\Bigl\lfloor \frac{k}{2}  \Bigr\rfloor \right)+1=
a + \Bigl \lceil \frac{k+1}{2} \Bigr \rceil, 
$$
proving Claim 2.

\medskip

%\textbf{Claim 3.} 
\textit{Claim 3. \ If $f$ is a $k$RTDF on $K_{a,b}$ such that all of $A$ consists of non-empty vertices or all of $B$ consists of non-empty vertices, then $||f|| \ge a+ \bigl \lceil \frac{k+1}{2}  \bigr \rceil$. }

\medskip

Let $f$ be a $\krt$-function on $K_{a,b}$ such that $|f(x_i)|\geq 1$ for  every $i\in [a]$; the argument is similar if all the vertices in $B$ are non-empty.  
Suppose that in $A$ there are exactly $s$ vertices which are singleton sets (i.e. suppose $|f(x_1)| = \cdots = |f(x_s)| = 1$ and $|f(x_i)| \ge 2$ for $s < i \le a$). Without loss of generality, assume that the colors appearing among the singletons are $1,2, \ldots, r$, for some $r \le k$.  If $s = 0$, then $||f|| \ge 2a \ge a+ \bigl \lceil \frac{k+1}{2}  \bigr \rceil$, so
we are done.  Thus we assume that $s \ge 1$; also recall that $f$ is a minimum weight $k$RTDF.  
In $B$ we must have exactly one occurrence of each of the  colors $1,2, \ldots, r$, and no other colors.  Among the labels in $A$ which are not singletons we can assume that collectively they include  each color from $\{r+1, \ldots, k \}$ exactly once, and contain no other colors.  To see why we can make that assumption, consider a vertex in $A$ with a non-singleton label $L$, such that $x, y \in L$ where $x$ and $y$ are distinct and $y$ appears on some other vertex in $A$.  We can remove $y$ from $L$ and add the color $x$ to any label in $B$ to arrive at a new $\krt$-function.
So we will assume that in $A$, $f$ consists of the $s$ singletons, and then the rest of $A$ contains each color from $\{r+1, \ldots, k \}$ exactly once.
Thus $||f|| = s + k$, since we have the $s$ singletons in $A$, the colors $1,2, \ldots,r$ appearing exactly once in $B$, and the colors $r+1, \ldots, k$ appearing exactly once in $A$.
We have at least one singleton, so $r \ge 1$, and thus the non-singletons in
$A$ use at most $\bigl \lfloor \frac{k-1}{2} \bigr \rfloor$ vertices of $A$, thus
$s \ge a - \bigl \lfloor \frac{k-1}{2} \bigr \rfloor$.
We can compute as follows
$$||f|| = s + k \ge a - \Bigl \lfloor \frac{k-1}{2} \Bigr \rfloor + k 
= a + \Bigl \lceil \frac{k+1}{2} \Bigr \rceil, 
$$
completing the proof of Claim 3.

\medskip

We turn back to the proof of the theorem, considering the last two cases for the value of $a$.  
By Claims 1 and 2, we have the upper bounds on $\krt(K_{a, b})$ in both cases.
For the lower bounds, we first observe the following lower bounds on a  $k$RTDF $g$
depending on how it labels $A$ and $B$ (the first three lower bounds are simple observations, and 
the fourth is just a statement of  Claim 3).

\begin{enumerate}

\item If $g$ has empty vertices in both $A$ and $B$ then $||g|| \ge 2k$.

\item If $g$ has empty vertices in all of $A$ or all of $B$, then $||g || \ge 2a$.

\item If $g$ has no empty vertices then $||g|| \ge 2a$.

\item If $g$ has no empty vertices in $A$ or no empty vertices in $B$ then $||g|| \ge a+\bigl \lceil \frac{k+1}{2} \bigr \rceil$.

\end{enumerate}
Notice that the four conditions above are not mutually exclusive, but do cover all possible labellings of $A$ and $B$;
thus to show a lower bound, it suffices to cover the above cases.

If $\bigl \lfloor \frac{k}{2} \bigr \rfloor < a < \bigl \lceil \frac{3k-2}{2}  \bigr \rceil$, then
% 2k-\lfloor \frac{k}{2}\rfloor-1$,  by Claim 2 we have $\krt(K_{a, b}) \leq a+\bigl \lceil \frac{k+1}{2} \bigr \rceil$.
we refer to the above 4 conditions and note that 
$2a = a + a \ge a +\bigl \lceil \frac{k+1}{2} \bigr \rceil$, thus covering Conditions 2 and 3.  Condition 4 already matches this case.
To deal with Condition 1, we note its impossibility for a $\krt$-function $g$, because (using Claim 2) 
$|| g || \le a + \bigl \lceil \frac{k+1}{2} \bigr \rceil < 
\bigl \lceil \frac{3k-2}{2}  \bigr \rceil + \bigl \lceil \frac{k+1}{2} \bigr \rceil = 2k$.

If  $a \geq  \bigl \lceil \frac{3k-2}{2}   \bigr \rceil$, then 
%by Claim 1 we know that $\krt(K_{a,b})\leq 2k$. 
we refer to the above 4 conditions and note that the following calculations suffice in order to cover all the conditions
(recall that $k \ge 2$):
$$2a \ge  2 \cdot \Bigl \lceil \frac{3k-2}{2}   \Bigr \rceil \geq 2k \ \ \hbox{ and } \ \
a + \Bigl \lceil \frac{k+1}{2} \Bigr \rceil \geq \Bigl \lceil \frac{3k-2}{2}   \Bigr \rceil + \Bigl \lceil \frac{k+1}{2} \Bigr \rceil
=2k.$$

\end{proof}

\section{Bounds relating to  $k$-rainbow total domination}
\label{sec_k_total}

In this section we study bounds on the $k$-rainbow total domination number in terms of other domination numbers. 
Shao et al.~\cite{shao14}  proved that 
$\gamma_{{\rm r}k'}(G) \leq \gamma_{{\rm r}k}(G)+(k'-k)\lfloor \frac{\gamma_{{\rm r}k}(G)}{k} \rfloor$ and consequently $\gamma_{{\rm r}k'}(G) \leq \frac{k'}{k}\gamma_{{\rm r}k}(G)$, for
a connected graph $G$ and positive integers $k$ and $k'$ such that $k'\geq k$ (see Theorem 1 and Corollary 1 in \cite{shao14}). 
Using the same proof we can prove the same relations hold for $k$-rainbow total domination.  For completeness, we briefly
recall their proof.
Let $f$ be a $\krt$-function of $G$ and $a_i$ the number of vertices $u$ for which $i\in f(u)$. We may without loss of generality assume that
$a_1\geq a_2\geq  \cdots \geq a_k$. Since $\krt(G)=a_1+a_2+  \cdots + a_k$ we conclude that $ka_k\leq \krt(G)$. Now, by adding the colors $k+1,k+1,\ldots,k'$ to the label of any vertex $u$ such that $k \in f(u)$,
we clearly get a $k'$RTDF whose weight is $\krt(G)+(k'-k) a_k$, therefore 
$\gamma_{k'{\rm rt}}(G) \leq  \krt(G)+(k'-k) \frac{\krt(G)}{k}=\frac{k'}{k}\krt(G)$.
Summarizing this result and adding an observation we conclude the following.

\begin{proposition} \label{prop_frac_bounds}
Let $k$ and $k'$ be integers such that $1 \le k \le k'$. Then the following bounds hold:
$$\gamma_{k'rt}(G) \le \dfrac{k'}{k} \gamma_{krt}(G)  \quad \textit{ and } \quad \gamma_{rk'}(G) \le \dfrac{k'}{k} \gamma_{rk}(G).$$ 
Furthermore, the bounds are tight when  $G$ is $K_{a,b}$ with $a,b \ge 2k'$.
\end{proposition}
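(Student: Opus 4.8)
The inequalities themselves require essentially no new work: the bound $\gamma_{k'rt}(G) \le \frac{k'}{k}\gamma_{krt}(G)$ is precisely what the displayed computation immediately preceding the statement establishes (running the color-duplication argument on a $\gamma_{krt}$-function), and the companion bound $\gamma_{rk'}(G) \le \frac{k'}{k}\gamma_{rk}(G)$ is the cited result of Shao et al. So the real content of the proof is the tightness assertion, and my plan is to read off the exact values from Lemma~\ref{lem-Kab-2k} and Proposition~\ref{thm3} and check that the ratio $\frac{k'}{k}$ is attained on the nose.

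Fix $k,k'$ with $1\le k\le k'$ and take $G=K_{a,b}$ with $a,b\ge 2k'$. Since $K_{a,b}\cong K_{b,a}$, I may assume $a\le b$, so $2k'\le a\le b$. First I would verify that we land in the top (``$2k$'') regime of the rainbow-domination formula: we have $a+b\ge 4k'>k$ and $a\ge 2k'\ge 2k$, so Lemma~\ref{lem-Kab-2k} gives $\gamma_{rk}(K_{a,b})=2k$; applying the same lemma with $k'$ in place of $k$ (using $a\ge 2k'$) gives $\gamma_{rk'}(K_{a,b})=2k'$. Hence $\frac{k'}{k}\gamma_{rk}(K_{a,b})=\frac{k'}{k}\cdot 2k=2k'=\gamma_{rk'}(K_{a,b})$, so the rainbow-domination bound holds with equality.

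For the total version, assume first $k\ge 2$. I would check that $a$ sits in the top case of Proposition~\ref{thm3}: since $4k\ge 3k-2$ we have $2k\ge \lceil \frac{3k-2}{2}\rceil$, and $a\ge 2k'\ge 2k$, so $a\ge \lceil \frac{3k-2}{2}\rceil$; together with $a+b>k$ this yields $\gamma_{krt}(K_{a,b})=2k$. The same proposition with $k'$ (using $a\ge 2k'\ge \lceil\frac{3k'-2}{2}\rceil$) gives $\gamma_{k'rt}(K_{a,b})=2k'$, whence $\frac{k'}{k}\gamma_{krt}(K_{a,b})=2k'=\gamma_{k'rt}(K_{a,b})$. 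The one configuration outside the scope of Proposition~\ref{thm3} is $k=1$, since that proposition assumes $k\ge 2$: here $\gamma_{1rt}=\gamma_t$ and $\gamma_t(K_{a,b})=2$ (one vertex on each side forms a total dominating set), so the claim reduces to $\gamma_{k'rt}(K_{a,b})=2k'$, which is trivial when $k'=1$ and otherwise follows from the $k'\ge 2$ computation just given.

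The step I expect to be the only real obstacle is not any inequality but the bookkeeping: confirming that the single uniform hypothesis $a,b\ge 2k'$ simultaneously forces \emph{both} parameters $k$ and $k'$ into the top case of \emph{each} closed form, and isolating the $k=1$ total-domination corner that Proposition~\ref{thm3} does not cover. Once the case conditions $a\ge 2k$ (for the lemma) and $a\ge \lceil\frac{3k-2}{2}\rceil$ (for the proposition) are verified under this hypothesis, the arithmetic $\frac{k'}{k}\cdot 2k=2k'$ closes the argument.
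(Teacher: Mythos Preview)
Your proposal is correct and follows essentially the same route as the paper: the inequalities are inherited from the preceding Shao et al.\ argument, and tightness is read off from Lemma~\ref{lem-Kab-2k} and Proposition~\ref{thm3} applied to $K_{a,b}$ with $a,b\ge 2k'$. If anything you are more careful than the paper, which simply asserts the values $2k$ and $2k'$ without verifying the case hypotheses or isolating the $k=1$ corner where Proposition~\ref{thm3} does not directly apply.
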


\begin{proof}

The upper bounds follow from the Shao et al. proof sketched above. 
To prove the tightness of the bounds
 we provide a family of graphs for which the equalities are attained. 
In both cases, we take the graphs $G$ to be $K_{a,b}$, where $a,b \ge 2k'$. The tightness follows from Lemma~\ref{lem-Kab-2k} and Proposition~\ref{thm3}, which imply  $\gamma_{rk}(G) = \gamma_{krt}(G) = 2k$ and 
 $\gamma_{rk'}(G) = \gamma_{k'rt}(G) = 2k'$.

\end{proof}

Bringing the above results together with another observation, we can give a full description of how the 
rainbow domination numbers compare when the parameter is increased by 1.

\begin{theorem} \label{thm_k_to_kminus}
For any graph $G$, and integer $k$ such that $k \ge 2$, we have the following: %for $\krt(G)$:
\begin{enumerate}
\item
$\gamma_{(k-1) {\rm rt}}(G) \le \krt(G)\le \frac{k}{k-1}\gamma_{(k-1) {\rm rt}}(G)$,
\item
$\gamma_{{\rm r}(k-1)}(G) \le \rk(G) \le \frac{k}{k-1}\gamma_{{\rm r}(k-1)}(G)$.
\end{enumerate}
Moreover, both upper bounds are tight for $K_{a,b}$ where $a,b\geq 2k$, and the first lower bound is tight for
$k \ge 3$, when the graph is $K^+_{k,b}$ and $b \ge 1$.
\end{theorem}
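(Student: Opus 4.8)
The two upper bounds require essentially no new work: they are the special case of Proposition~\ref{prop_frac_bounds} obtained by taking the smaller parameter to be $k-1$ and the larger to be $k$, which is legitimate since $1 \le k-1 \le k$ as $k \ge 2$. Indeed, the substitution $(k-1,k)$ for $(k,k')$ turns $\gamma_{k'{\rm rt}}(G) \le \frac{k'}{k}\gamma_{k{\rm rt}}(G)$ into $\krt(G) \le \frac{k}{k-1}\gamma_{(k-1){\rm rt}}(G)$, and the rainbow inequality is handled identically. So the real content lies in the two lower bounds and in the tightness claims.

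For the second lower bound $\gamma_{{\rm r}(k-1)}(G) \le \rk(G)$ the plan is to use a colour-merging map. I would fix a $\rk$-function $f$ and let $\phi\colon[k]\to[k-1]$ identify colour $k$ with colour $k-1$ and fix every other colour, setting $g(v)=\{\phi(i):i\in f(v)\}$. Since $\phi$ is onto and collapses at most one pair of colours, $|g(v)|\le|f(v)|$, so $\|g\|\le\|f\|$. A vertex is empty under $g$ exactly when it is empty under $f$, and for such a vertex the neighbourhood colour-union is carried by $\phi$ from $[k]$ onto $[k-1]$; hence $g$ is a $(k-1)$RDF and $\gamma_{{\rm r}(k-1)}(G)\le\|g\|\le\|f\|=\rk(G)$.

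The same merge starts the first lower bound $\gamma_{(k-1){\rm rt}}(G)\le\krt(G)$, but here I must also preserve the total condition, and this is the main obstacle. Applying $\phi$ to a $\krt$-function $f$ keeps the rainbow-domination and weight properties, and the only singletons of $g$ that can violate the total condition come from a label $\{k-1,k\}$ of $f$: such a vertex $v$ becomes $g(v)=\{k-1\}$ yet may have no neighbour carrying colour $k-1$ or $k$ under $f$. Call these the problem vertices. The key accounting point is that each problem vertex was a weight-$2$ label that the merge turned into weight $1$, so it frees one unit of weight. I would spend that unit locally: for each problem vertex $v$ pick a neighbour $w$ (which exists once $G$ has no isolated vertices, as total domination requires) and add colour $k-1$ to $g(w)$. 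Then $v$'s singleton is totally dominated by $w$, and $w$ — empty or not — is in turn dominated by $v$, with the extra weight paid for by the freed unit. One then checks that distinct problem vertices are pairwise non-adjacent (each carries colour $k-1$, so an adjacent problem vertex would already provide the missing neighbour), so that reusing a shared repair neighbour only helps and the total weight added never exceeds the total weight freed; hence $\|g\|\le\|f\|$ and $g$ is a genuine $(k-1)$RTDF. When $k\ge 3$ there is a cleaner alternative repair, namely resetting $g(v)=\{k-1,j\}$ for some $j\ne k-1$ in $[k-1]$, which destroys the singleton without using any slack; the slack argument above is what I would rely on to cover the borderline case $k=2$.

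Finally, for tightness I would simply read off the relevant values. Both upper bounds are tight on $K_{a,b}$ with $a,b\ge 2k$: this is exactly the tightness recorded in Proposition~\ref{prop_frac_bounds} (with larger parameter $k$), where Lemma~\ref{lem-Kab-2k} and Proposition~\ref{thm3} give $\gamma_{{\rm r}(k-1)}(K_{a,b})=\gamma_{(k-1){\rm rt}}(K_{a,b})=2(k-1)$ and $\rk(K_{a,b})=\krt(K_{a,b})=2k$, so that $\frac{k}{k-1}\cdot 2(k-1)=2k$. For the first lower bound I would use Lemma~\ref{thm_Kplus} on $K^+_{k,b}$: taking $a=k$ it gives $\krt(K^+_{k,b})=2k$ (valid for $k\ge 2$), and applied with parameter $k-1$ it gives $\gamma_{(k-1){\rm rt}}(K^+_{k,b})=2k$ provided $2\le k-1$, i.e.\ $k\ge 3$. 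Thus for $k\ge 3$ and $b\ge 1$ both sides equal $2k$, so $\gamma_{(k-1){\rm rt}}\le\krt$ holds with equality; the hypothesis $k\ge 3$ is precisely what Lemma~\ref{thm_Kplus} needs for the $(k-1)$-value to reach $2k$.
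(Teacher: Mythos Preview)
Your argument is correct and aligns with the paper on the upper bounds (both via Proposition~\ref{prop_frac_bounds}), on the second lower bound (the same colour merge $k\mapsto k-1$), and on all the tightness claims (Lemma~\ref{thm_Kplus}, Lemma~\ref{lem-Kab-2k}, Proposition~\ref{thm3}).

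The one substantive difference is in the mechanics of the first lower bound. The paper avoids your merge-then-repair scheme by choosing the recolouring more carefully up front: from a $\krt$-function $f$ it sets $g(v)=f(v)$ when $k\notin f(v)$, and when $k\in f(v)$ it replaces $k$ by $k-1$ \emph{unless} $k-1$ is already present, in which case it replaces $k$ by $1$. For $k\ge 3$ this sends $\{k-1,k\}$ to $\{1,k-1\}$, so no label of size $\ge 2$ ever collapses to a singleton and no slack accounting is needed; this is exactly your ``cleaner alternative'' with $j=1$. Your merge-and-repair route is a touch longer but has the virtue of treating the borderline case $k=2$ explicitly: there the label $\{1,2\}$ unavoidably collapses to $\{1\}$ under any map into $2^{[1]}$, and your freed-weight argument shows how to fix the resulting singleton at no net cost, a case the paper's verification does not spell out.
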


\begin{proof} The upper bounds and their tightness follow from Proposition~\ref{prop_frac_bounds}.  Now we discuss the lower bounds.

We prove that $\krt(G) \geq \gamma_{(k-1) {\rm rt}}(G)$. Let $f$ be a $\krt$-function on $G$. We use $f$ to define $g:V(G)\rightarrow2^{[k-1]}$ by letting $g(v) = f(v)$ as long as $k \not \in f(v)$; if $k \in f(v)$ then
$$g(v) = 
\begin{cases}
(f(v) \setminus \{ k \}) \cup \{ 1 \}, & \hbox{if $k-1 \in f(v)$} \\
(f(v) \setminus \{ k \}) \cup \{ k-1 \}, & \hbox{otherwise.}
\end{cases}
$$ 
If $g(v)=\emptyset$, clearly all the colors from $[k-1]$ appear in the neighborhood of $v$. If $g(v)=\{i\}$ and $i \in [k-2]$, then by the definition of $g$ there exists a neighbor $u$ of $v$ such that $i\in g(u)$. If $g(v)=\{k-1\}$, then we have one of the following situations: either $f(v)=\{k-1\}$, so there exists $u\in N_G(v)$ such that $k-1\in f(u)$ and therefore also $k-1\in g(u)$, or $f(v)=\{k\}$ so there exists $u\in N_G(v)$ such that $k\in f(u)$, and thus $k-1\in g(u)$ again. Thus $g$ is a $(k-1)$RTDF and we have $\gamma_{(k-1) {\rm rt}}(G) \le ||g||\leq ||f|| =\krt(G)$, which shows the first upper bound.
To prove the second upper bound, that $\gamma_{{\rm r}(k-1)}(G) \le \rk(G)$, is even simpler, since by taking a $\rk$-function  $f$ on $G$ we can turn it to a $(k-1)$RDF simply by replacing every occurrence of the element $k$ by $k-1$.
  
Now we discuss the tightness of the lower bounds. For the first inequality, when $k \ge 3$, we obtain
$\gamma_{(k-1) {\rm rt}}(G) = \krt(G)$ when $G$ is $K^+_{k,b}$ (recall Lemma~\ref{thm_Kplus}).
When $k = 2$ the equality states that $\gamma_t(G)=\trt(G)$; there exist graphs satisfying the equality, which were in fact characterized in  Theorem~$3$ of \cite{LuHou}.
For the second inequality, when $k\geq 3$ we take integers $a, b$ such that $k < a < 2k \le b$, so by Lemma~\ref{lem-Kab-2k}
$\gamma_{{\rm r}(k-1)}(K_{a,b}) = \rk(K_{a,b})=a$.  When $k = 2$,
 we are interested in graphs $G$ that satisfy $\gamma(G)=\gamma_{r2}(G)$. Such graphs exist and were characterized by Hartnell and Rall, see Theorem~$4$ in~\cite{hart04}.

\end{proof}

The following simple corollary will lead to an interesting question.

\begin{cor} \label{thm_rainbow_to_total}
For any graph $G$ without isolated vertices,
$\gamma_t(G) \le \gamma_{krt}(G) \le k \gamma_t(G)$, and the upper bound is tight.
\end{cor}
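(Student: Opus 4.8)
The plan is to establish the two inequalities $\gt(G) \le \krt(G)$ and $\krt(G) \le k\gt(G)$ separately, and then to exhibit a graph family attaining the upper bound. For the lower bound $\gt(G) \le \krt(G)$, I would take a $\krt$-function $f$ on $G$ and show that the support $D = \{v : f(v) \neq \emptyset\}$ is a total dominating set. To verify this I must check that every vertex of $G$ has a neighbor in $D$. For an empty vertex $v$, the $k$-rainbow condition forces $\bigcup_{u \in N(v)} f(u) = [k]$, which is nonempty (as $k \ge 1$), so some neighbor is nonempty, hence in $D$. For a nonempty vertex $v$, the total-domination clause of the $k$RTDF definition (applied to any single color in $f(v)$) guarantees a neighbor $u$ with that color in $f(u)$, so $u \in D$. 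Since every vertex has a neighbor in $D$, we get $\gt(G) \le |D| \le \|f\| = \krt(G)$, using that each nonempty vertex contributes at least $1$ to the weight.

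For the upper bound $\krt(G) \le k\gt(G)$, the natural approach is to start from a minimum total dominating set $T$ of $G$ and build a $k$RTDF by assigning the full color set $[k]$ to every vertex of $T$ and $\emptyset$ to all others. I would then verify this is a valid $k$RTDF: every empty vertex has a neighbor in $T$ (since $T$ totally dominates $G$), and that neighbor carries all of $[k]$, satisfying the rainbow condition; and every vertex labeled $[k]$ is itself in $T$, so by total domination it has a neighbor in $T$ carrying every color, in particular satisfying the singleton-matching clause. The weight of this function is exactly $k|T| = k\gt(G)$, giving the desired bound. This direction is essentially routine once the construction is written down.

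The remaining task is tightness of the upper bound, and this is where I would lean on the earlier special-graph computations rather than on a fresh argument. I would look for a graph with $\krt(G) = k\gt(G)$. The graph $K^+_{a,b}$ from Lemma~\ref{thm_Kplus} is a promising candidate but gives $\krt = 2a$ and $\gt = \gamma$-like behavior, so the ratio there is $2$, matching only $k=2$; for general $k$ I expect the cleanest example to be $K_{a,b}$ with both parts large, where Proposition~\ref{thm3} yields $\krt(K_{a,b}) = 2k$ when $a \ge \lceil (3k-2)/2 \rceil$. I would combine this with the fact that $\gt(K_{a,b}) = 2$ (each part needs one vertex to totally dominate the other), so $\krt(K_{a,b}) = 2k = k \cdot 2 = k\gt(K_{a,b})$, exhibiting tightness for every $k \ge 2$.

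The step I expect to require the most care is the tightness claim, specifically pinning down which graph attains equality for all relevant $k$ and confirming its total domination number exactly, since the two inequalities themselves follow from short structural arguments about supports and full-color assignments. The subtlety is that tightness must hold for the stated range of $k$, so I would want to double-check the boundary conditions in Proposition~\ref{thm3} (that the parameters place $K_{a,b}$ in the regime where $\krt = 2k$) and confirm that $\gt(K_{a,b}) = 2$ holds whenever $a, b \ge 1$ with neither part a single isolated-inducing vertex, matching the hypothesis that $G$ has no isolated vertices.
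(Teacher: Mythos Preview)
Your upper bound construction (assign $[k]$ to every vertex of a minimum total dominating set) and your tightness witness ($K_{a,b}$ with $a,b$ large enough that Proposition~\ref{thm3} gives $\krt=2k$, while $\gt(K_{a,b})=2$) are exactly what the paper does.

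The gap is in your lower bound. You assert that for a nonempty vertex $v$ the $k$RTDF condition ``applied to any single color in $f(v)$'' produces a nonempty neighbor, but the extra clause in the definition of a $k$RTDF fires only when $f(v)$ is a \emph{singleton} $\{i\}$; a vertex with $|f(v)|\ge 2$ is subject to no such constraint. For a concrete failure take $G=K_2$ with vertices $u,v$ and $k=2$: the assignment $f(u)=\{1,2\}$, $f(v)=\emptyset$ is a $\trt$-function of weight $2$, yet its support $\{u\}$ is not a total dominating set of $K_2$, since $u$ has no neighbor in $\{u\}$. Hence the support of a $\krt$-function need not totally dominate $G$, and your inequality $\gt(G)\le |D|$ is not justified.

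The paper obtains the lower bound by a different route: it uses $\gamma_{1{\rm rt}}(G)=\gt(G)$ for graphs without isolated vertices together with the monotonicity $\gamma_{(k-1){\rm rt}}(G)\le \gamma_{k{\rm rt}}(G)$ from Theorem~\ref{thm_k_to_kminus}, and iterates down to $k=1$.
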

\begin{proof}
Recall that if $G$ does not contain isolated vertices, then $\gamma_{1rt}(G)=\gamma_t(G)$. Therefore Theorem~\ref{thm_k_to_kminus} implies the lower bound. It is easy to observe the upper bound by assigning the entire set $[k]$ to each vertex in a total dominating set of $G$.
The upper bound is tight when 
$G$ is $K_{a,b}$ ($a, b \ge 2k$), since $\gamma_t(G) = 2$ is immediate, while  
$\gamma_{krt}(G) = 2k$ follows from Proposition~\ref{thm3}.
\end{proof}

\noindent
Notice that in the last corollary there is no claim about the tightness of the lower bound. As we have already mentioned in the proof of Theorem~\ref{thm_k_to_kminus}, the lower bound in the case when $k=2$ can be attained and all such graphs were characterized in \cite{LuHou}.
For $k \ge 3$, the lower bound of Corollary~\ref{thm_rainbow_to_total} does not appear to be tight, which leads to the following question (where we conjecture that  $b(k) > 1$).

\begin{question}
Find a function $b(k)$ such that for $k \ge 3$, the following bound is true and tight for connected graphs $G$:
$$b(k) \cdot \gamma_t(G) \le \gamma_{krt}(G).$$
\end{question}
\noindent
Note that $\gamma_t(C_n) \geq \frac{n}{2}$ and $\gamma_{krt}(C_n) \le n$, so $b(k) \le 2$.

In the next proposition and question we consider how the $k$-rainbow domination number compares to the $k$-rainbow total domination number.

\begin{proposition} 
\label{rk-krt}
For any graph $G$ and any integer $k$ such that $k \ge 2$ the following holds:
$$\rk(G) \le \krt(G) \le 2\rk(G).$$
Furthermore, the lower bound is tight for every $k$, and the upper bound is tight only for $k = 2$, that is,
for $k \ge 3$, $\krt(G) < 2\rk(G)$.
\end{proposition}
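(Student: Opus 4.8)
The plan is to establish the two inequalities first and then treat tightness. For the lower bound I would just observe that, by definition, every $k$RTDF is in particular a $k$RDF, so any $\krt$-function of $G$ is a $k$-rainbow dominating function, whence $\rk(G)\le\krt(G)$. For the upper bound I would start from a $\rk$-function $f$ and \emph{totalize} it with a controlled increase in weight. Set $g=f$ and then, for every vertex $v$ with $f(v)=\{i\}$ for which no neighbour carries colour $i$ (call such a vertex a \emph{problematic singleton}), enlarge its label by one arbitrary extra colour $i'\ne i$ (possible since $k\ge 2$). Enlarging labels never destroys the rainbow condition, and every vertex that is still a singleton under $g$ was by construction non-problematic, so $g$ is a $k$RTDF. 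Since the number of problematic singletons is at most the number of non-empty vertices, which is at most $\|f\|$, we get $\krt(G)\le\|g\|\le 2\|f\|=2\rk(G)$. (Equivalently this is the classical bound $\gt(H)\le 2\gamma(H)$ applied to $H=G\cart K_k$, which has no isolated vertices when $k\ge 2$, via $\rk(G)=\gamma(G\cart K_k)$ and $\krt(G)=\gt(G\cart K_k)$.)

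Tightness of the lower bound holds for every $k$: taking $G=K_{a,b}$ with $a,b\ge 2k$, Lemma~\ref{lem-Kab-2k} and Proposition~\ref{thm3} give $\rk(G)=\krt(G)=2k$. For tightness of the upper bound at $k=2$ I would use $C_4$, for which $\rd(C_4)=2$ and $\trt(C_4)=4$, so $\trt(C_4)=2\rd(C_4)$.

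The heart of the statement is the strict inequality for $k\ge 3$, and here I would refine the totalization count, assuming $G$ has no isolated vertices (this hypothesis is needed: $K_1$ has $\krt(K_1)=2=2\rk(K_1)$ for every $k$). Fix a $\rk$-function $f$ with $\|f\|=m$. If $f$ assigns some vertex a label of size $\ge 2$, then the number of problematic singletons is at most $m-2$, so the construction already yields $\krt(G)\le 2m-2$. If $f$ is all singletons but two colour-carrying vertices $u,v$ are adjacent, then adding the colour of $v$ to $u$ makes $u$ a non-singleton while simultaneously supporting $v$, so one fewer enlargement is needed and $\krt(G)\le 2m-1$. The only surviving case is that \emph{every} $\rk$-function consists of singletons whose support is an independent set; then every colour-carrying vertex is problematic and every empty vertex sees all $k$ colours.

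In this surviving case all of the local tricks above fail, and I would exploit $k\ge 3$ to \emph{consolidate colours}: as the computation for $K_{k,b}$ illustrates (where $\rk=k$ but $\krt=k+\lceil (k+1)/2\rceil<2k$), one can cover the $k$ colours using labels of size $2$ placed on vertices with large enough neighbourhoods, producing either a weight-$m$ $k$RDF that carries a label of size $\ge 2$ (which reduces to the first case, giving $\krt(G)\le 2m-2$) or directly a $k$RTDF of weight below $2m$. Making this consolidation work for an arbitrary graph in the surviving case — rather than only for the complete bipartite example — is the step I expect to be the main obstacle, and it is precisely where the hypothesis $k\ge 3$ is essential.
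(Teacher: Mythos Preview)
Your treatment of the two inequalities and their tightness matches the paper (its example $K_{2,b}$ for the $k=2$ upper bound includes your $C_4=K_{2,2}$), and your remark about isolated vertices is correct: the strict inequality fails for $K_1$, and the paper's own argument tacitly assumes there are none when it relabels everything $\{1\}$ in the no-empty-vertex subcase.

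The genuine gap is exactly where you flag it. Once the $\rk$-function $f$ uses only singleton labels you do not provide a construction; the paper does, and it is a single local trick rather than a global ``consolidation''. If $f$ has no empty vertex, relabel every vertex $\{1\}$ to obtain a $k$RTDF of weight $||f||=\rk(G)<2\rk(G)$; otherwise pick an empty vertex $v$. Since $f$ is a $k$RDF, $v$ has neighbors $x\in V_1$ and $y\in V_{k-1}$, and $x\ne y$ because $k\ge 3$. Now modify $f$: change every label $\{k-1\}$ to $\{1\}$ and every label $\{k\}$ to $\{k-1,k\}$ (so empty vertices still see all of $[k]$), give $v$ the label $\{1\}$, leave $x$ and $y$ at $\{1\}$, and add one extra color to every other vertex that is still a singleton. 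The only remaining singletons are $v,x,y$, all labeled $\{1\}$, with $v$ adjacent to both $x$ and $y$; hence this is a $k$RTDF of weight at most $2||f||-2+1<2\rk(G)$. This is the missing step, and it uses $k\ge 3$ precisely to make $1\ne k-1$, so that $x\ne y$ and two enlargements, not one, are saved. With it in hand your intermediate case ``two support vertices adjacent'' becomes superfluous.
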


\begin{proof}
As we have already explained, the lower bound follows by definitions of both invariants.
To see that the upper bound holds, let $f$ be a $\rk$-function on $G$. Then a $k$-rainbow total dominating function $g$ can be constructed from $f$ by adding an element to every singleton label of $f$, so $g$ has no singleton labels.
Then $\krt(G)\leq ||g||\leq 2||f||=2\rk(G)$.

The tightness of the lower bound holds when $G$ is $K_{a,b}$ for $b \ge a \ge 2k$, since then we have
$\rk(G) = \krt(G) = 2k$, using Lemma~\ref{lem-Kab-2k} and Proposition~\ref{thm3}.
The tightness of the upper bound, when $k=2$, follows by taking $G$ to be $K_{2,b}$, where $b \ge 2$, since by  Proposition~\ref{thm3} and Lemma~\ref{lem-Kab-2k} we get $\gamma_{2rt}(G) = 4 = 2\gamma_{r2}(G)$. 

Now we show that the upper bound is strict, when $k \ge 3$,
i.e.~$\gamma_{krt}(G) < 2\gamma_{rk}(G)$ if $k \ge 3$.
Let $f$ be a $\rk$-function on $G$, i.e.~$\rk(G)=||f||=|V_1| + \cdots + |V_k| + \Sigma_{S \subseteq [k], |S| \ge 2}|S|\cdot|V_S|$. As pointed out above, from $f$ we can construct  a $k$RTDF $g$ by adding one label for each singleton, so the weight of $g$ is at most $2|V_1| + \cdots + 2|V_k| + \Sigma_{S \subseteq [k], |S| \ge 2}|S|\cdot|V_S|$. If there exists at least one vertex $v$ with $|f(v)|\geq 2$, we derive
$$2\rk(G)=2||f||>2|V_1| + \cdots + 2|V_k| + \Sigma_{S \subseteq [k], |S| \ge 2}|S|\cdot|V_S|\geq ||g||\geq \krt(G),$$
and we are done.
Thus now assume that $f$ assigns only singletons and empty sets to vertices of $G$.
We can assume that $f$ has at least one empty vertex, otherwise we could change all the labels to $\{ 1 \}$ to achieve a $k$RTDF of the same weight, so we would be done.

Let $f_1:V(G)\rightarrow 2^{[k]}$  be defined from $f$ as follows:
$$f_1(v) = 
\begin{cases}
\{ 1 \}, & \hbox{if $f(v) = \{ k -1 \}$} \\
\{ k-1, k \}, & \hbox{if $f(v) = \{ k \}$} \\
f(v), & \hbox{otherwise} \\
\end{cases}
$$

Now construct $f_2$ from $f_1$ according to the following:
\begin{itemize}
\item let $v$ be a vertex such that $f(v)=f_1(v)=\emptyset$, and define $f_2(v)=\{1\}$. Note that since $f$ is a $k$RDF there exist $x\in V_1$ and $y\in V_{k-1}$ such that $v$ is adjacent to both of them. Recall that $f_1(x)=f_1(y)=f_2(v)=\{1\}$,

\item for all non-empty vertices except $x, y,$ and $v$, add an element to it, so that it is no longer a singleton.

\end{itemize}
It is easy to see that $f_2$ is a $k$RTDF, thus we derive 
$$\krt(G)\leq ||f_2|| \leq 2|V_1| + 2|V_2| + \cdots + 2|V_k| -2 +1 = 2||f||-1<2\rk(G),$$
which concludes the proof.
\end{proof}

\noindent
The strict inequality in Proposition~\ref{rk-krt}  leads to the next question.

\begin{question}
Find a function $a(k)$ such that for every $k$ we have the tight bound
$$\gamma_{krt}(G) \le a(k) \cdot \gamma_{rk}(G).$$
\end{question}
From Proposition~\ref{rk-krt} we know $a(k) < 2$ for $k \ge 3$.
By taking $G$ to be $K_{k, b}$, for $b \ge k$, we get $\gamma_{rk}(G) = k$ and $\gamma_{krt}(G) > (3/2)k$,
by  Lemma~\ref{lem-Kab-2k},
and Proposition~\ref{thm3}, respectively; thus $a(k) > 3/2$.
In summary, we know that for $k \ge 3$, we must have 
$3/2 < a(k) < 2$.

%%%%%%%%%%%%%%%%%%%%%%%%%%%%%%
%%%%%%%%%%%%%%%%%%%%%%%%%%%%%%
\section{Lower bounding $k$-rainbow total domination}
\label{sec_dom}

The main point of this section is to lower bound $\krt(G)$ in terms of $\gamma(G)$.
In \cite{San19} it was observed that for a graph $G$ of order $n$ where $n > k > 1$, it is always the case that $\max\{k,\gamma(G)\}\leq \krt(G)$. 
While the lower bound of $k$ can be achieved, we will show that the lower bound of $\gamma(G)$ cannot.
 Goddard and Henning \cite{GodHen18} show that for any graph $G$, $\frac{4}{3}\gamma (G)$ is a tight lower bound for  $\trt(G)$.  We generalize their lower bound to all $k$ in the following theorem, where the interesting issue of tightness will be discussed after the theorem.

\begin{theorem} \label{thm_rainbow_vs_domination}
For a graph $G$ and $k \ge 2$ we have
$$\krt(G) \geq \dfrac{2k}{k+1} \gamma(G).$$
\end{theorem}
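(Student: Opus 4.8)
The plan is to prove the equivalent statement that $\gamma(G)\le \frac{k+1}{2k}\krt(G)$. I would take an optimal $\krt$-function $f$, set $W=||f||=\krt(G)$, and explicitly build a dominating set $D$ of $G$ together with a charging that distributes the weight $W$ among the vertices of $D$ so that each member of $D$ receives at least $\frac{2k}{k+1}$ of the weight; since the charges will be disjoint this yields $\frac{2k}{k+1}|D|\le W$ and hence $\gamma(G)\le|D|\le\frac{k+1}{2k}W$, as desired.

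First I would record the two structural facts supplied by the definition of a $k$RTDF (with $k\ge 2$). From the total-domination clause, every singleton vertex $v$, say $f(v)=\{i\}$, has a neighbor $u$ with $i\in f(u)$, so in particular a same-color nonempty neighbor. From the rainbow clause, every empty vertex $v$ satisfies $\bigcup_{u\in N(v)}f(u)=[k]$; since $k\ge2$ an empty vertex cannot be dominated by a single singleton, and if all of its nonempty neighbors are singletons then it has at least $k$ of them, of $k$ distinct colors. Next I would start $D$ from the set $B$ of \emph{big} vertices ($|f(v)|\ge2$); each such vertex is charged its own weight $|f(v)|\ge 2\ge\frac{2k}{k+1}$ and so self-funds. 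Since $B$ dominates every vertex of $N[B]$, the remaining task is to dominate $R:=V(G)\setminus N[B]$, whose vertices are empty or singleton and have no big neighbor. Using the first fact one may safely discard singletons while keeping each discarded singleton dominated by a retained same-color neighbor, and using the second fact one may discard singletons around an empty vertex of $R$ while retaining at least one of its color-neighbors; this lets me choose a subset $K$ of singletons with $D=B\cup K$ still dominating $G$.

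The core of the argument, and the step I expect to be the main obstacle, is to carry out the selection of $K$ and the accompanying charging simultaneously: each retained singleton in $K$ must be paired with a disjoint cluster of discarded singletons (its same-color partner chain, together with the other color-neighbors of any empty vertex it is responsible for) whose total weight is at least $\frac{2k}{k+1}$. It is exactly here that the rainbow condition pays off — an empty vertex in $R$ brings $k$ singleton neighbors of $k$ distinct colors, a surplus of singleton weight that funds its dominator at the required rate, which is why the constant is $\frac{2k}{k+1}$ rather than the easy $\frac12$ obtainable from partner-pairs alone. The delicate points are ensuring the funding clusters are pairwise disjoint and do not reuse weight already assigned to $B$, and handling the boundary between $R$ and $N[B]$ (empty vertices that are isolated in $G[R]$, and singletons shared between $R$ and $N[B]$). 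I would resolve these by processing the singletons color class by color class, or equivalently by a greedy/matching selection, so that every chosen dominator is assigned its own cluster of total weight at least $\frac{2k}{k+1}$.

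Finally, I would note that the same combinatorics can be phrased through the identity $\krt(G)=\gt(G\cart K_k)$: a minimum total dominating set $S$ of $H:=G\cart K_k$ has no isolated vertex in $H[S]$ and projects onto the nonempty vertices of $G$ (which form a dominating set), and the charging above is precisely the statement that this projection can be thinned to a dominating set of size at most $\frac{k+1}{2k}|S|$.
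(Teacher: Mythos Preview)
Your charging scheme cannot work as stated, and the obstruction is visible already on the paper's tightness examples. You require that the weight $W$ be split into \emph{disjoint} pieces, one per vertex of $D$, each of size at least $\frac{2k}{k+1}$. But every piece you build (a big label, or a retained singleton together with its cluster of discarded singletons) has \emph{integer} weight, so ``at least $\frac{2k}{k+1}$'' really means ``at least $2$''. Your argument would therefore prove $\gamma(G)\le |D|\le W/2$, i.e.\ $\krt(G)\ge 2\gamma(G)$, which is false for $k=2,3$. Concretely, take $G\in\Sgraph_m$ from the paper with the $3$RTDF assigning $\{i\}$ to every vertex of $H_i$ and $\emptyset$ to $I$: there are no big vertices, the singleton weight is $6m$, and $\gamma(G)=4m$. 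Disjoint clusters of weight $\ge 2$ would force $|K|\le 3m<4m$, so no such $K$ can dominate $G$. The intuition that empty vertices bring a ``surplus'' of $k$ singleton neighbors is correct, but you cannot cash it in through disjoint integer-weight clusters attached to a single dominating set.

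What the paper does instead is precisely what your scheme lacks: it builds $k$ different dominating sets $U_1,\dots,U_k$ and averages. For each color $i$ it keeps all of $V_i$, all big vertices, and from every other color class $V_j$ only a dominating set $D_j$ of $G[V_j']$ of size $\le |V_j|/2$ (Ore). Summing the sizes gives $k\gamma(G)\le \sum_i|U_i|\le \frac{k+1}{2}\krt(G)$. The fractional rate $\frac{2k}{k+1}$ arises exactly because a singleton in $V_j$ is counted once (in $U_j$) at full weight and $k-1$ times (in the other $U_i$'s) at half weight on average; this is an inherently fractional bookkeeping that a single dominating set with disjoint charges cannot reproduce. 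If you want to rescue a charging proof, you must allow each unit of singleton weight to be split across several members of $D$, which amounts to the same averaging.
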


\begin{proof}

We use the notation $G[S]$ for the subgraph of $G$ induced by the set $S\subseteq V(G)$.
Let $f:V(G)\rightarrow 2^{[k]}$ be a $\krt$-function.
Let $V_i' \subseteq V_i$ be the set of vertices in $V_i$ having a neighbor in $V_i$, and let $D_i$ be a minimum dominating set for $G[V_i']$. Since $G[V_i']$ contains no isolated vertices, we have (due to a result of Ore in \cite{Ore}) that $| D_i | \le \frac{|V_i'|}{2} \leq \frac{|V_i|}{2}$. For $i\in [k]$ 
we define a set $U_i$ as follows:
$$U_i = V_i \ \cup \bigcup_{\substack{S \subseteq [k],\\ |S| \ge 2}} V_S \ \ \cup \ \ \bigcup_{j \neq i} D_j.$$
In other words, $U_i$ consists of the following set of vertices in $G$: those that are labeled by just $\{i\}$, those that are labeled by a subset of $[k]$ of size at least $2$,  and from among those labeled by $\{j\}$, where $j \neq i$, take just the ones in $D_j$. 

Now we show that each $U_i$ is a dominating set of $G$.
First, consider a vertex $v$ with $f(v)=\emptyset$. Since $f$ is a $k$RTDF, there is a neighbor $x$ of $v$
such that $i\in f(x)$. This means that $v$ is adjacent to a vertex in $V_i$ or a vertex labeled by a set of size $2$ or greater.  Therefore $v$ is adjacent to a vertex in $U_i$.  
Second, consider a vertex $u\in V_j\setminus D_j$ where $j \neq i$.  If $u$ is in $V_j'$ then it is adjacent to a vertex from $D_j$, otherwise, $u$ is not adjacent to a vertex labeled by $\{j\}$, so it must be adjacent to a vertex labeled by $S$ where $j \in S$ and $|S|\geq 2$.  Thus $u$ is adjacent to a vertex in $U_i$.
Finally, the rest of the vertices are actually in $U_i$.  Thus $U_i$ is a dominating set.
Now we can estimate as follows:

\begin{alignat*}{2} 
k \cdot \gamma(G) 
    & \le |U_1| + \ldots + |U_k| & \hspace{2cm} & %\text{(1)} 
		\\		
    & = |V_1| + \cdots + |V_k| + k \cdot  \sum_{\substack{S \subseteq [k],\\ |S| \ge 2}} |V_S| +
(k-1)(|D_1| + \ldots + |D_k|)\\ 
    & \le |V_1| + \cdots + |V_k| +  \sum_{\substack{S \subseteq [k],\\ |S| \ge 2}} k \cdot|V_S|+
\frac{k-1}{2}(|V_1| + \ldots + |V_k|) & \hspace{2cm} & %\text{(2)}
\\ 
    & = \sum_{S \subseteq [k]} |S| \cdot |V_S|   
+ \sum_{\substack{S \subseteq [k],\\ |S| \ge 2}} (k - |S|) |V_S| +
\frac{k-1}{2}(|V_1| + \ldots + |V_k|)\\ 
    & \le \gamma_{krt}(G)  + 
(k-1)\sum_{\substack{S \subseteq [k],\\ |S| \ge 2}} |V_S| +
\frac{k-1}{2}(|V_1| + \ldots + |V_k|)& \hspace{2cm} & %\text{(3)}
\\
    & = \gamma_{krt}(G)  
		+\frac{k-1}{2}\left( \ \left(\sum_{\substack{S \subseteq [k],\\ |S| \ge 2}} 2 \cdot |V_S| \right)+  \ |V_1| + \ldots + |V_k| \ \right)\\
		& \le \gamma_{krt}(G) + \frac{k-1}{2} \gamma_{krt}(G) & \hspace{2cm} & %\text{(4)}
		\\ 		
		& = \frac{k+1}{2} \gamma_{krt}(G).  			
\end{alignat*}
Summarizing, we have 
$k \gamma(G) \le \frac{k+1}{2} \gamma_{krt}(G)$,
thus $\frac{2k}{k+1}\gamma(G) \le \gamma_{krt}(G)$. 
\end{proof}

\noindent
In what follows we consider the tightness of the bound in the above theorem. The tightness for $k=2$ follows from the mentioned result by Goddard and Henning; see Theorem 3.1 in~\cite{GodHen18}. To see that the lower bound is tight for $k=3$, we modify a construction from \cite{GodHen18} in order to define
a family of graphs $\Sgraph_{m}$, where $m \ge 2$ is an integer parameter.
A graph $G$ is in the family $\Sgraph_{m}$ if we can construct it  as follows.  There are two parts to $G$: $H$ and $I$.
$H$ consists of the three graphs $H_1, H_2$, and $H_3$, where each $H_i$ is $m$ disjoint edges.
To construct $I$, for each $3$-tuple of vertices $\bar{x} = (x_1, x_2, x_3$), where $x_i \in H_i$,
add an independent set of size at least $m$ to $I$, calling it $I_{\bar{x}}$. Attach each $x_i$ to all vertices in $I_{\bar{x}}$. 
See Figure~\ref{t3} for an illustration of a graph from $\Sgraph_{3}$; for clarity, only $3$ of the $6^3$ independent sets are depicted.
We calculate $\gamma_{3rt}(G)$ for $G$ in $\Sgraph_{m}$, and then state the immediate and interesting corollary.

\begin{figure}[ht!]
\begin{center}
\begin{tikzpicture}[scale=0.8]

\draw[black,rounded corners=10]
     (-2,-2) rectangle (13,4);
		\draw (13.3,1) node {$H$};

 \draw[black,rounded corners=10]
     (-1,-1) rectangle (2,3);
		\draw (0.5,3.3) node {$H_1$};
		
 \draw[black,rounded corners=10]
     (4,-1) rectangle (7,3);
		\draw (5.5,3.3) node {$H_2$};
		
	\draw[black,rounded corners=10]
     (9,-1) rectangle (12,3);
		\draw (10.5,3.3) node {$H_3$};

\node [My Style, name=a, label=left:$x_1^1$ ]   at (0,2) {};		
\node [My Style, name=b, label=right:$x_2^1$]   at (1,2) {};	
\node [My Style, name=c, label=left:$x_3^1$]    at (0,1) {};	
\node [My Style, name=d, label=right:$x_4^1$]   at (1,1) {};	
\node [My Style, name=e, label=left:$x_5^1$]    at (0,0) {};	
\node [My Style, name=f, label=right:$x_6^1$]   at (1,0) {};	
		
\draw[thick] (a) -- (b);
\draw[thick] (c) -- (d);
\draw[thick] (e) -- (f);

\node [My Style, name=a1, label=left:$x_1^2$ ]   at (5,2) {};		
\node [My Style, name=b1, label=right:$x_2^2$]   at (6,2) {};	
\node [My Style, name=c1, label=left:$x_3^2$]    at (5,1) {};	
\node [My Style, name=d1, label=right:$x_4^2$]   at (6,1) {};	
\node [My Style, name=e1, label=left:$x_5^2$]    at (5,0) {};	
\node [My Style, name=f1, label=right:$x_6^2$]   at (6,0) {};	
		
\draw[thick] (a1) -- (b1);
\draw[thick] (c1) -- (d1);
\draw[thick] (e1) -- (f1);

\node [My Style, name=a2, label=left:$x_1^3$ ]   at (10,2) {};		
\node [My Style, name=b2, label=right:$x_2^3$]   at (11,2) {};	
\node [My Style, name=c2, label=left:$x_3^3$]    at (10,1) {};	
\node [My Style, name=d2, label=right:$x_4^3$]   at (11,1) {};	
\node [My Style, name=e2, label=left:$x_5^3$]    at (10,0) {};	
\node [My Style, name=f2, label=right:$x_6^3$]   at (11,0) {};
	
\draw[thick] (a2) -- (b2);
\draw[thick] (c2) -- (d2);
\draw[thick] (e2) -- (f2);	

\draw[black,rounded corners=10]
     (-3,-8.6) rectangle (14,-3);
		\draw (14.3,-6) node {$I$};
		
  \draw[black,rounded corners=10]
     (-2,-7.5) rectangle (-1,-3.5);
		\draw (-1.5,-8) node {$I_{(x_1^1,x_1^2,x_1^3)}$};

   \draw (1,-6) node {$\cdots$};
	
	\draw[black,rounded corners=10]
     (3,-7.5) rectangle (4,-3.5);
		\draw (3.5,-8) node {$I_{(x_1^1,x_4^2,x_5^3)}$};

 \draw (8,-6) node {$\cdots$};

\draw[black,rounded corners=10]
     (12,-7.5) rectangle (13,-3.5);
		\draw (12.5,-8) node {$I_{(x_6^1,x_6^2,x_6^3)}$};

\node [My Style, name=x]   at (-1.5,-4) {};		
\node [My Style, name=y]   at (-1.5,-5) {};	
\node [My Style, name=u]   at (-1.5,-6) {};	
\node [My Style, name=v]   at (-1.5,-7) {};

\draw[very thin] (x) -- (a);
\draw[very thin] (y) -- (a);
\draw[very thin] (u) -- (a);
\draw[very thin] (v) -- (a);

\draw[very thin] (x) -- (a1);
\draw[very thin] (y) -- (a1);
\draw[very thin] (u) -- (a1);
\draw[very thin] (v) -- (a1);

\draw[very thin] (x) -- (a2);
\draw[very thin] (y) -- (a2);
\draw[very thin] (u) -- (a2);
\draw[very thin] (v) -- (a2);

\node [My Style, name=x1]   at (3.5,-4) {};		
\node [My Style, name=y1]   at (3.5,-5) {};	
\node [My Style, name=u1]   at (3.5,-6) {};	

\draw[very thin] (x1) -- (a);
\draw[very thin] (y1) -- (a);
\draw[very thin] (u1) -- (a);

\draw[very thin] (x1) -- (d1);
\draw[very thin] (y1) -- (d1);
\draw[very thin] (u1) -- (d1);

\draw[very thin] (x1) -- (e2);
\draw[very thin] (y1) -- (e2);
\draw[very thin] (u1) -- (e2);

\node [My Style, name=x5]   at (12.5,-4) {};		
\node [My Style, name=y5]   at (12.5,-5) {};	
\node [My Style, name=u5]   at (12.5,-6) {};	
\node [My Style, name=v5]   at (12.5,-7) {};

\draw[very thin] (x5) -- (f);
\draw[very thin] (y5) -- (f);
\draw[very thin] (u5) -- (f);
\draw[very thin] (v5) -- (f);

\draw[very thin] (x5) -- (f1);
\draw[very thin] (y5) -- (f1);
\draw[very thin] (u5) -- (f1);
\draw[very thin] (v5) -- (f1);

\draw[very thin] (x5) -- (f2);
\draw[very thin] (y5) -- (f2);
\draw[very thin] (u5) -- (f2);
\draw[very thin] (v5) -- (f2);		
		
\end{tikzpicture}
\end{center}
\caption{A graph from $\Sgraph_{3}$.}
\label{t3}
\end{figure}

\begin{proposition} \label{thm_special_graph}
Suppose $G$ is in the family
$\Sgraph_{m}$.  Then
$$  \gamma_{3rt}(G) = \dfrac{3}{2}\gamma(G).$$
\end{proposition}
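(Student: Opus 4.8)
The plan is to prove the two inequalities $\gamma_{3rt}(G)\ge \frac{3}{2}\gamma(G)$ and $\gamma_{3rt}(G)\le \frac{3}{2}\gamma(G)$ separately. For the lower bound I would simply invoke Theorem~\ref{thm_rainbow_vs_domination} with $k=3$, which gives $\gamma_{3rt}(G)\ge \frac{2\cdot 3}{3+1}\gamma(G)=\frac{3}{2}\gamma(G)$; this is exactly the point of the construction, to witness tightness of that theorem. The entire remaining content is a matching upper bound, namely a $3$RTDF of weight equal to $\frac{3}{2}\gamma(G)$. To turn this into a concrete number I first pin down $\gamma(G)$, and I claim $\gamma(G)=4m$.

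For $\gamma(G)\le 4m$, take $D=V(H_1)$ together with one endpoint of each of the $m$ edges of $H_2$ and of $H_3$. Since every triple $\bar x=(x_1,x_2,x_3)$ has $x_1\in H_1\subseteq D$, the set $D$ dominates every independent set $I_{\bar x}$, as well as all of $H_1$; the chosen endpoints dominate $H_2$ and $H_3$. Hence $|D|=2m+m+m=4m$. For $\gamma(G)\ge 4m$, let $D$ be a minimum dominating set and put $a_i=|V(H_i)\setminus D|$. If some $a_i=0$, say $a_1=0$, then $V(H_1)\subseteq D$ contributes $2m$ vertices; moreover each of the $4m$ vertices of $H_2\cup H_3$ must be dominated, and a direct check shows that \emph{every} vertex of $G$ dominates at most two of them (a vertex of $H_2$ or $H_3$ dominates only itself and its edge-partner, a vertex of $I_{\bar x}$ dominates only $x_2$ and $x_3$ among them, and a vertex of $H_1$ dominates none), so $D$ must contain at least $2m$ additional vertices outside $V(H_1)$, giving $|D|\ge 4m$. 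If instead $a_i\ge 1$ for all $i$, then there are $a_1a_2a_3\ge 1$ triples $\bar x$ all of whose coordinates avoid $D$; each such $I_{\bar x}$ is an independent set whose vertices have no neighbor in $D$ except possibly within $I_{\bar x}$, forcing $I_{\bar x}\subseteq D$ and contributing at least $m$ vertices, with distinct bad triples giving disjoint sets. Thus $|D|\ge \bigl(6m-(a_1+a_2+a_3)\bigr)+m\,a_1a_2a_3\ge 7m-3\ge 4m$ for $m\ge 2$. Therefore $\gamma(G)=4m$.

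Finally I would exhibit the $3$RTDF realizing the bound: assign the singleton $\{i\}$ to every vertex of $H_i$ for $i\in\{1,2,3\}$, and leave every vertex of $I$ empty. Every empty vertex lies in some $I_{\bar x}$ and sees the three labels $\{1\},\{2\},\{3\}$ on its neighbors $x_1,x_2,x_3$, so the rainbow condition holds; every singleton vertex of $H_i$ has its edge-partner also labeled $\{i\}$, so the total condition holds. The weight is $|V(H)|=6m$, so $\gamma_{3rt}(G)\le 6m=\frac{3}{2}\cdot 4m=\frac{3}{2}\gamma(G)$. Combining this with the lower bound from Theorem~\ref{thm_rainbow_vs_domination} gives $\gamma_{3rt}(G)=\frac{3}{2}\gamma(G)$, as desired.

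I expect the main obstacle to be the lower bound $\gamma(G)\ge 4m$, and within it the double-counting step asserting that no single vertex can help dominate more than two vertices of $H_2\cup H_3$; everything else (the explicit dominating set, the explicit $3$RTDF, and the appeal to the theorem) is routine verification.
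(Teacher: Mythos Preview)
Your proposal is correct and follows the same overall plan as the paper: invoke Theorem~\ref{thm_rainbow_vs_domination} for the lower bound, exhibit the obvious $3$RTDF (label $H_i$ by $\{i\}$) for $\gamma_{3rt}(G)\le 6m$, and prove $\gamma(G)\ge 4m$. The only substantive difference is in this last step. The paper chooses a minimum dominating set $D$ maximizing $|D\cap V(H)|$ and uses an exchange argument (swap $I_{\bar x}\subseteq D$ for $\{x_1,x_2,x_3\}$) to force $V(H_i)\subseteq D$ for some $i$, thereby reducing entirely to your Case~1. You instead handle the complementary case directly via the counting bound $|D|\ge \bigl(6m-\sum_i a_i\bigr)+m\,a_1a_2a_3$. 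Both arguments are short; yours avoids the extremal choice but needs the (easy, unstated) observation that the right-hand side is minimized at $a_1=a_2=a_3=1$, which holds because $m\,a_ja_\ell\ge m\ge 2>1$ makes the expression increasing in each variable. Your explicit verification of $\gamma(G)\le 4m$ is redundant---once $\gamma_{3rt}(G)\ge\frac{3}{2}\gamma(G)$, $\gamma_{3rt}(G)\le 6m$, and $\gamma(G)\ge 4m$ are in hand, equality is forced throughout---but harmless.
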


\begin{proof} 

We obtain $\gamma_{3rt}(G) \ge \dfrac{3}{2}\gamma(G)$ from Theorem~\ref{thm_rainbow_vs_domination}.  The rest of the proof shows that $\gamma_{3rt}(G) \le \dfrac{3}{2}\gamma(G)$. 
It suffices to show that $\gamma_{3rt}(G)  \le 6m$ and $\gamma(G) \ge 4m$.  For the first inequality, let
$f: V(G)\rightarrow 2^{[3]}$ be defined by $f(v)=\{i\}$ if $v\in V(H_i)$ and $f(v)=\emptyset$ for every vertex $v$ in $I$. It is easy to see that $f$ is a $3$RTDF of $G$ thus $\gamma_{3rt}(G)\leq ||f|| \le 6m$.  The rest of the proof shows that
$\gamma(G) \ge 4m$.

Let $D$ be a minimum size dominating set of $G$, where $D$ is chosen to have the following property: $D$ contains 
as many vertices from 
$H_1 \cup H_2 \cup H_3$ as possible.  We claim that such a $D$ must contain every vertex of $H_i$ for some $i =1, 2, $ or $3$. 
Assume for contradiction that there is no such $H_i$; then there must be a tuple $(u_1, u_2, u_3)$ such that $u_i \in H_i$ and no $u_i$ is in $D$.  The structure of $G$ implies that every vertex of $I_{(u_1, u_2, u_3)}$ must be in $D$.  However the closed neighborhood of $I_{(u_1, u_2, u_3)}$ contains the vertices $I_{(u_1, u_2, u_3)}$ along with $\{ u_1, u_2, u_3\}$, while the closed neighborhood of $\{  u_1, u_2, u_3  \}$ contains the same vertices as well as others.  Since $I_{(u_1, u_2, u_3)}$ contains at least $3$ vertices, we could remove $I_{(u_1, u_2, u_3)}$ from $D$ and replace them by $\{ u_1, u_2, u_3\}$ to arrive at a different minimum dominating set that contains more vertices of $H_1 \cup H_2 \cup H_3$, contradicting the property of $D$ having as many vertices from $H_1 \cup H_2 \cup H_3$ as possible.

Thus we assume $D$ contains all of $H_1$.
The vertices of $H_1$ are not adjacent to the vertices of $H_2 \cup H_3$, so we still must dominate all of them.
Note that for any vertex in the graph, it dominates at most two vertices in
$H_2 \cup H_3$, thus to dominate the $4m$ vertices of $H_2 \cup H_3$ we need at least $2m$ more vertices in addition to the $2m$ vertices of $H_1$.  Thus $\gamma(G) \ge 4m$.

\end{proof}

\begin{cor} \label{cor_3rt_bound}
For a graph $G$ we have the following tight inequality:
$$\gamma_{3rt}(G) \ge \dfrac{3}{2}\gamma(G).$$
\end{cor}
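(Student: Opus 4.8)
The plan is to prove Corollary~\ref{cor_3rt_bound} by instantiating the general lower bound $\krt(G) \ge \frac{2k}{k+1}\gamma(G)$ from Theorem~\ref{thm_rainbow_vs_domination} at $k = 3$, and then separately exhibiting the tightness using the family $\Sgraph_m$ already analyzed in Proposition~\ref{thm_special_graph}. These two ingredients together deliver exactly the claim, so the corollary is essentially a repackaging of earlier results.

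First I would establish the inequality itself. Setting $k = 3$ in Theorem~\ref{thm_rainbow_vs_domination} gives
\[
\gamma_{3rt}(G) \ge \frac{2 \cdot 3}{3 + 1}\,\gamma(G) = \frac{6}{4}\,\gamma(G) = \frac{3}{2}\,\gamma(G),
\]
which is precisely the stated bound. Since Theorem~\ref{thm_rainbow_vs_domination} holds for every graph $G$ and every $k \ge 2$, no additional hypotheses are needed, and the inequality is immediate.

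Second I would argue tightness. The word ``tight'' in the corollary means that there exist graphs achieving equality, and Proposition~\ref{thm_special_graph} supplies an entire family of them: for any $m \ge 2$, every graph $G$ in $\Sgraph_m$ satisfies $\gamma_{3rt}(G) = \frac{3}{2}\gamma(G)$. Thus the constant $\frac{3}{2}$ in the lower bound cannot be improved, since these graphs meet it exactly. I would simply invoke Proposition~\ref{thm_special_graph} to witness tightness, noting that $\Sgraph_m$ is nonempty for each such $m$.

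I do not anticipate any genuine obstacle here, since all the technical work has been done in Theorem~\ref{thm_rainbow_vs_domination} (the lower bound) and Proposition~\ref{thm_special_graph} (the matching construction); the only care needed is to confirm that the arithmetic $\frac{2 \cdot 3}{3+1} = \frac{3}{2}$ is correct and that the family $\Sgraph_m$ indeed realizes equality, both of which are already secured above. The corollary is therefore a one-line deduction from the two preceding results, and I would write the proof accordingly.
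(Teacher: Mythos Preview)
Your proposal is correct and matches the paper's approach exactly: the paper treats this corollary as an immediate consequence of Theorem~\ref{thm_rainbow_vs_domination} (specialized to $k=3$) for the inequality and Proposition~\ref{thm_special_graph} for tightness, and in fact does not even supply a separate proof environment for it.
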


The construction of $\Sgraph_{m}$, along with the last corollary does not obviously generalize to $k \ge 4$.
Summarizing, we have the following tight lower bounds for the cases of $k = 2$ and $k = 3$: $\gamma_{2rt}(G) \ge \frac{4}{3}\gamma(G)$ and $\gamma_{3rt}(G) \ge \frac{3}{2}\gamma(G)$.  We make the following conjecture for the other values of $k$.

\begin{conjecture} \label{conj_2gammaLower}
For a graph $G$ and $k \ge 4$, we have the tight bound $\krt(G)\geq 2\gamma(G)$.
\end{conjecture}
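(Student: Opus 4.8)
The plan is to isolate the easy tightness part and reduce the inequality to a single case. Tightness is immediate from Lemma~\ref{thm_Kplus}: for $G=K^+_{k,b}$ we have $\gamma(G)=k$ and $\krt(G)=2k$, so $\krt(G)=2\gamma(G)$ for every $k\ge 2$, in particular for $k\ge 4$. For the inequality I would first reduce to $k=4$. By the first inequality of Theorem~\ref{thm_k_to_kminus}, $\gamma_{(k-1)\mathrm{rt}}(G)\le\krt(G)$, so $\krt(G)$ is non-decreasing in $k$; hence once $\gamma_{4\mathrm{rt}}(G)\ge 2\gamma(G)$ is established, every $k\ge 4$ follows from $\krt(G)\ge\gamma_{4\mathrm{rt}}(G)$. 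I note in passing that one cannot route this through the (non-total) number $\gamma_{r4}$: for $K^+_{a,b}$ with $a$ large one has $\gamma_{r4}(K^+_{a,b})\le a+7<2a=2\gamma$, so the total condition is genuinely needed.

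So fix a $\gamma_{4\mathrm{rt}}$-function $f$; I want to extract a dominating set $D$ of $G$ with $|D|\le\tfrac12\krt(G)$. Let $Q=\{v:|f(v)|\ge 2\}$, let $P=\{v:|f(v)|=1\}$, and $P_i=\{v:f(v)=\{i\}\}$ for $i\in[4]$. Since $\sum_{v\in Q}|f(v)|\ge 2|Q|$, it is enough to find $D$ with $|D|\le|Q|+\tfrac12|P|$. Let $P_i'\subseteq P_i$ be the vertices of $P_i$ having a neighbour in $P_i$; then $G[P_i']$ has no isolated vertex, so by the result of Ore used in Theorem~\ref{thm_rainbow_vs_domination} there is a dominating set $D_i$ of $G[P_i']$ with $|D_i|\le\tfrac12|P_i'|$. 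The totality of $f$ forces every vertex of $P_i\setminus P_i'$ to have a neighbour in $Q$ that carries colour $i$, so $P_i\setminus P_i'\subseteq N(Q)$. Take $D=Q\cup\bigcup_{i\in[4]}D_i$.

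A routine check shows that $D$ dominates all of $V(G)$ except possibly some empty vertices: vertices in $Q$ and in each $P_i\setminus P_i'$ lie in $N[Q]$; vertices in $P_i'$ are dominated by $D_i$; every empty vertex with a neighbour in $Q$ is dominated by $Q$; and every singleton vertex lying outside $N[Q]$ must, by totality, have a same-colour neighbour in its own colour class, hence lies in some $P_i'$ and is dominated by $D_i$. Furthermore $|D|\le|Q|+\tfrac12\sum_i|P_i'|\le|Q|+\tfrac12|P|$, so the size budget is already met. The only vertices that can fail to be dominated are the empty vertices $w$ having no neighbour in $Q$; since $f$ is a $4$RDF, each such $w$ has at least four singleton neighbours, one of each of the four colours, and all of them lie in $\bigcup_iP_i'$.

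This concentrates the whole difficulty into one covering problem: the sets $D_i$ (together with a few extra singletons, for which a slack of $\tfrac12\bigl(|P|-\sum_i|P_i'|\bigr)$ is still available) must be chosen so that, in addition to dominating each $G[P_i']$, they hit a neighbour of every such empty vertex $w$, all while keeping $|D|\le\tfrac12|P|$. This is exactly where the hypothesis $k\ge 4$ should be used: each uncovered empty vertex presents at least four distinct-colour candidate dominators, and it is this fourfold redundancy that one expects to be enough to absorb all empty vertices into half of the singletons. With only three colours the redundancy is insufficient, and indeed the family $\Sgraph_m$ of Proposition~\ref{thm_special_graph} shows the optimal ratio drops to $\tfrac32$. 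The hard part --- and the reason the statement is still only a conjecture --- is to carry out this coordinated choice rigorously: one must select the per-colour dominating sets $D_i$ jointly (by an LP/discharging scheme, or by a probabilistic selection that exploits the degree-at-least-four condition on the undominated empty vertices) so that the two requirements hold simultaneously within the $\tfrac12|P|$ budget. The crude $\tfrac{2k}{k+1}$ averaging of Theorem~\ref{thm_rainbow_vs_domination} is too lossy to control these correlated constraints, which is the central obstacle to overcome.
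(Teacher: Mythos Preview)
The statement is posed in the paper as an open \emph{conjecture}; the paper does not supply a proof. Immediately after the conjecture the paper records only that the optimal constant $c$ in $\krt(G)\ge c\,\gamma(G)$ satisfies $\frac{2k}{k+1}\le c\le 2$, the lower end from Theorem~\ref{thm_rainbow_vs_domination} and the upper end (tightness of $2$) from $K^+_{k,b}$ via Lemma~\ref{thm_Kplus}. Your tightness argument is exactly this, so that part agrees with the paper.

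Beyond tightness, there is no paper proof to compare to. Your reduction to $k=4$ via the monotonicity in Theorem~\ref{thm_k_to_kminus} is correct and a sensible simplification the paper does not make, and your decomposition $D=Q\cup\bigcup_i D_i$ together with the budget inequality $|D|\le|Q|+\tfrac12|P|$ is a sharper restructuring of the Ore-based argument than the averaging in Theorem~\ref{thm_rainbow_vs_domination}. However, as you yourself state, the proposal is not a proof: the decisive step---choosing the per-colour sets $D_i$ jointly so that every empty vertex with no neighbour in $Q$ is still dominated, while respecting the $\tfrac12|P|$ budget---is left open. Your remark that the fourfold colour redundancy at such vertices ``should'' suffice is a heuristic, not an argument; nothing in the outline prevents a bad instance where each admissible choice of $D_i$ in $G[P_i']$ misses many empty vertices and the available slack $\tfrac12\bigl(|P|-\sum_i|P_i'|\bigr)$ is zero. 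So your write-up is a clear articulation of the obstacle and goes further than the paper's one-line discussion, but it does not close the gap, and the statement remains conjectural.
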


\noindent
In Conjecture~\ref{conj_2gammaLower} the correct coefficient $c$ in front of $\gamma(G)$ (which we conjecture to be 2) satisfies 
$\frac{2k}{k+1} \le c \le 2$.  The first inequality holds by Theorem~\ref{thm_rainbow_vs_domination}.
The second inequality holds because $\gamma_{\rm{k}rt}(K^+_{k, b}) = 2k = 2\cdot \gamma(K^+_{k, b})$, by
Lemma~\ref{thm_Kplus} (for $k \ge 2, b \ge 1$).

In the case of bipartite graphs, when $k=2$, the bound in Theorem~\ref{thm_rainbow_vs_domination} can be improved. Using hypergraphs Azarija et al.~showed that for a bipartite graph $G$, $\gamma_{2rt}(G) = 2\gamma(G)$ (see Theorem 1 in~\cite{azar}). Goddard and Henning~\cite{GodHen18} (see Theorem 2.1) presented a shorter proof using paired domination. Using $2$-rainbow total domination the proof of the mentioned result is even simpler.

\begin{theorem}[\cite{azar,GodHen18}] \label{azar}
If $G$ is a bipartite graph then
         $\gamma_{2rt}(G) = 2\gamma(G)$.

\end{theorem}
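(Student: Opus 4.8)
The plan is to prove the two inequalities separately, as only one of them actually requires bipartiteness. For the upper bound $\gamma_{2rt}(G)\le 2\gamma(G)$, which holds for every graph, I would take a minimum dominating set $D^\ast$, assign the label $\{1,2\}$ to each vertex of $D^\ast$, and assign $\emptyset$ to all other vertices. Every empty vertex then has a neighbor in $D^\ast$ whose label already contains both colors, so the rainbow condition holds; and since this function uses no singleton labels, the total-domination condition is vacuous. This produces a $2$RTDF of weight $2|D^\ast|=2\gamma(G)$. Hence the whole content of the theorem is the reverse inequality $\gamma_{2rt}(G)\ge 2\gamma(G)$, and this is where I would use that $G$ is bipartite.

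For the lower bound I would fix a $\gamma_{2rt}$-function $f$ and let $(X,Y)$ be the bipartition of $G$. Writing $W_1=V_1\cup V_{12}$ and $W_2=V_2\cup V_{12}$ for the vertices whose label contains color $1$, respectively color $2$, we have $||f||=|W_1|+|W_2|$. The key idea is to split each color class across the bipartition and form the two candidate sets
\[
D=(W_1\cap X)\cup(W_2\cap Y)\qquad\text{and}\qquad D'=(W_2\cap X)\cup(W_1\cap Y).
\]
I claim both are dominating sets of $G$, which I would check by going through the vertices according to label and side. An empty vertex in $X$ has, by the rainbow condition, a color-$2$ neighbor, which by bipartiteness lies in $Y$ and hence in $W_2\cap Y\subseteq D$; symmetrically an empty vertex in $Y$ has a color-$1$ neighbor in $W_1\cap X\subseteq D$. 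The vertices of $V_1\cap X$, $V_{12}\cap X$ lie in $W_1\cap X\subseteq D$, and those of $V_2\cap Y$, $V_{12}\cap Y$ lie in $W_2\cap Y\subseteq D$. The only remaining cases are the ``wrong-side'' singletons: a vertex $y\in V_1\cap Y$ is not itself in $D$, but the total-domination condition forces $y$ to have a neighbor of color $1$, which by bipartiteness lies in $X$, i.e.\ in $W_1\cap X\subseteq D$; symmetrically every $x\in V_2\cap X$ has a color-$2$ neighbor in $W_2\cap Y\subseteq D$. So $D$ dominates $G$, and the same argument with the two colors interchanged shows $D'$ dominates $G$.

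To finish, I would note that the four pieces $W_1\cap X$, $W_1\cap Y$, $W_2\cap X$, $W_2\cap Y$ are distributed two into $D$ and two into $D'$, so $|D|+|D'|=|W_1|+|W_2|=||f||$. The smaller of the two dominating sets therefore has size at most $\tfrac12||f||$, giving $\gamma(G)\le \tfrac12||f||=\tfrac12\gamma_{2rt}(G)$, as required. I expect the main obstacle to be spotting the construction itself: one must realize that in the bipartite setting each color class can be split across the two sides so that the two resulting sets are \emph{both} dominating. The one genuinely delicate point in the verification is that the total-domination condition is exactly what guarantees each wrong-side singleton is dominated from the correct side of the bipartition; everything else reduces to a routine case check.
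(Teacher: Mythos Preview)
Your proof is correct and is essentially the same as the paper's: your sets $D=(W_1\cap X)\cup(W_2\cap Y)$ and $D'=(W_2\cap X)\cup(W_1\cap Y)$ coincide exactly with the paper's sets $A$ and $B$ (take $X=R$, $Y=L$ and unpack the definitions), and both arguments conclude via $2\gamma(G)\le |D|+|D'|=2|V_{12}|+|V_1|+|V_2|=\|f\|$. Your write-up is in fact more explicit than the paper's, which asserts ``Obviously, $A$ is a dominating set'' without spelling out that the total-domination condition is precisely what handles the wrong-side singletons.
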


\begin{proof}
Let $G$ be a bipartite graph with bipartition $(L,R)$, and let $f$ be a $\trt$-function for $G$. We define a set $A$ as the union of the following three subsets of $V(G)$: all vertices of $G$ with label $\{1,2\}$, all vertices from $L$ with label $\{2\}$, and   all vertices from $R$ with label $\{1\}$.
Obviously, $A$ is a dominating set in $G$. By interchanging $L$ and $R$ in the above definition, a set $B$ is defined, which is also a dominating set in $G$. Therefore 
$$2\gamma(G) \leq |A|+|B|=2 |V_{12}|+|V_1|+|V_2|=||f||.$$ 
The opposite inequality $||f|| \le 2 \gamma(G)$ is immediate (see Observation 2.3 in \cite{San19}).
\end{proof}

%\noindent
%The above theorem cannot be generalized to  $\gamma_{krt}(G) = k\gamma(G)$, as shown by a simple example: by Theorem 3.4 from~\cite{San19} we have $\trrt(C_9)=8$ and $3 \gamma(G)=9$. 

%\comment{Kerry: I suggest removing the last comment, as we do not go anywhere with this line of thought, i.e. not towards a question, conjecture, etc.}

%%%%%%%%%%%%%%
%%%%%%%%% %%%%%

% --------------------------------------------  VIZING Conjecture ------------------------------------------------
\section{Vizing-like conjectures}
\label{sec_vizing}

Vizing's well known conjecture states that for any graphs $G$ and $H$ 
        \begin{equation}\label{in.v}
				\gamma(G) \, \cdot \, \gamma(H) \le  \gamma(G\cp H).
				\end{equation}
By a result of Clark and Suen \cite{CS2000}	it is known that for any graphs $G$ and $H$ 
$$\gamma(G) \, \cdot \, \gamma(H) \le 2 \gamma(G\cp H).$$
Ho~\cite{Ho08} proved that  for any graphs $G$ and $H$ without isolated vertices, 
        $$
				\gamma_t(G) \, \cdot \, \gamma_t(H) \le  2 \cdot \gamma_t(G\cp H),
				$$
where the inequality is tight.  We use above results to provide a simple proof of the following Vizing-like inequalities for $k$-rainbow and $k$-rainbow total domination.

\begin{proposition} 
Let $G$ and $H$ be graphs and $k\geq 2$. Then 
$$
     \krt(G) \, \cdot \, \krt(H) \le 2k \cdot \krt(G\cp H)
$$
and 
$$
     \rk(G) \, \cdot \, \rk(H) \le 2k \cdot \rk(G\cp H). 
$$\end{proposition}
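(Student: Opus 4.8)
The plan is to reduce both inequalities to the known Vizing-type bounds of Clark and Suen~\cite{CS2000} for ordinary domination and of Ho~\cite{Ho08} for total domination, by passing through the generalized prism. The two tools I would exploit are the identities $\rk(G)=\gamma(G\cart K_k)$ and $\krt(G)=\gt(G\cart K_k)$, together with the fact that the Cartesian product is associative and commutative, so that $(G\cart K_k)\cp H\cong (G\cp H)\cart K_k$. The key structural observation is that the desired bounds are \emph{asymmetric}: I only need the $K_k$-expansion on one factor, and I can absorb the other rainbow parameter into an ordinary domination parameter at the cost of a factor $k$.

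For the rainbow domination inequality, I would first record the elementary bound $\rk(H)\le k\,\gamma(H)$, obtained by assigning the full color set $[k]$ to each vertex of a minimum dominating set of $H$. Next, applying the Clark--Suen inequality $\gamma(A)\,\gamma(B)\le 2\,\gamma(A\cp B)$ to $A=G\cart K_k$ and $B=H$, and rewriting $(G\cart K_k)\cp H\cong (G\cp H)\cart K_k$, gives $\rk(G)\cdot\gamma(H)\le 2\,\gamma((G\cp H)\cart K_k)=2\,\rk(G\cp H)$. Combining the two bounds, $\rk(G)\,\rk(H)\le k\,\rk(G)\,\gamma(H)\le 2k\,\rk(G\cp H)$, which is the claim.

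The rainbow total case runs in exact parallel: I would use Ho's inequality $\gt(A)\,\gt(B)\le 2\,\gt(A\cp B)$ in place of Clark--Suen, and the bound $\krt(H)\le k\,\gt(H)$ from Corollary~\ref{thm_rainbow_to_total} in place of $\rk(H)\le k\,\gamma(H)$. Applying Ho to $A=G\cart K_k$ and $B=H$ yields $\krt(G)\cdot\gt(H)\le 2\,\krt(G\cp H)$, and the factor $k$ coming from $\krt(H)\le k\,\gt(H)$ then gives $\krt(G)\,\krt(H)\le 2k\,\krt(G\cp H)$. The one point requiring care---the main, though mild, obstacle---is Ho's hypothesis that the graphs involved have no isolated vertices. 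Since $k\ge 2$ forces $G\cart K_k$ to be isolate-free automatically, the only genuine requirement is that $H$ (and, by its symmetric role, $G$) have no isolated vertices; this is also exactly the condition that makes $\gt(H)$ and the bound $\krt(H)\le k\,\gt(H)$ meaningful, so I would state it explicitly as a hypothesis for the total-domination inequality.
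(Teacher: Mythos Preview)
Your argument is correct, but it is organized differently from the paper's, and the difference has one concrete consequence worth noting.

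The paper proceeds \emph{symmetrically}: it writes $\krt(G)\,\krt(H)=\gt(G\cp K_k)\,\gt(H\cp K_k)$, applies Ho to the pair $(G\cp K_k,\,H\cp K_k)$ to get $2\,\gt(G\cp H\cp K_k\cp K_k)=2\,\krt(G\cp H\cp K_k)$, and only then spends the factor $k$ via $\krt(\,\cdot\,)\le k\,\gt(\,\cdot\,)$ on the graph $G\cp H\cp K_k$; the final identity $\gt((G\cp H)\cp K_k)=\krt(G\cp H)$ closes the chain. The $\rk$ case is handled the same way with Clark--Suen and the bound $\rk(\,\cdot\,)\le k\,\gamma(\,\cdot\,)$. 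Your route is \emph{asymmetric}: you expand only one factor to a prism, apply Ho (resp.\ Clark--Suen) to $(G\cp K_k,\,H)$, and absorb the factor $k$ on the raw graph $H$ via $\krt(H)\le k\,\gt(H)$ (resp.\ $\rk(H)\le k\,\gamma(H)$).

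What this buys: your version is a touch shorter and makes the role of the factor $k$ more transparent. What it costs: in the total case you genuinely need $H$ (or, by swapping, $G$) to be isolate-free, both for $\gt(H)$ to be defined and for Ho's hypothesis. The paper's symmetric route sidesteps this entirely, because for $k\ge 2$ the prisms $G\cp K_k$ and $H\cp K_k$ are automatically isolate-free, and so is $G\cp H\cp K_k$ where Corollary~\ref{thm_rainbow_to_total} is invoked; hence the paper's proposition holds for \emph{all} graphs $G,H$ as stated, without the extra hypothesis you propose to add. For the $\rk$ inequality there is no such discrepancy, since Clark--Suen carries no isolate-free assumption.
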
 

\begin{proof}
Using Ho's result, the equality in equation~(\ref{cpt}),  and Corollary~\ref{thm_rainbow_to_total} we derive 
\begin{alignat*}{2} 
\krt(G) \, \cdot \, \krt(H) 
    & = \gamma_t(G \cp K_k) \, \cdot \, \gamma_t(H \cp K_k) && \text{} \\		
    & \le 2\gamma_t(G \cp K_k \cp H \cp K_k)                && \text{} \\	
		& = 2\gamma_t(G \cp H \cp K_k \cp K_k)                  && \text{} \\ 
	  & = 2\krt(G \cp H \cp K_k)                              && \text{} \\ 		
	  & \le 2k\gamma_t(G \cp H \cp K_k)                       && \text{} \\	 
	  & = 2k\krt(G \cp H).                                 && \text{} 
\end{alignat*}
Following the same lines, using the above result of Clark and Suen, and known bound $\rk(G)\leq k\gamma(G)$ from \cite{hart04}, the Vizing-like inequality for $k$-rainbow domination can be derived.
\end{proof}

\noindent
We believe that the following stronger inequalities might hold. 

\begin{conjecture}\label{conB}
Let $G$ and $H$ be graphs and $k\geq 2$. Then 
$$\rk(G) \, \cdot \, \rk(H) \le  2 \cdot \rk(G\cp H).$$
\end{conjecture}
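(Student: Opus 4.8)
The plan is to reformulate the conjecture through the identity $\rk(G)=\gamma(G\cp K_k)$, under which the claim becomes
$$\gamma(G\cp K_k)\cdot\gamma(H\cp K_k)\le 2\,\gamma(G\cp H\cp K_k).$$
The first thing I would record is what the Clark--Suen inequality $\gamma(X)\gamma(Y)\le 2\gamma(X\cp Y)$ already yields. Taking $X=G\cp K_k$ and $Y=H$ gives the \emph{mixed} bound $\rk(G)\cdot\gamma(H)\le 2\rk(G\cp H)$, and symmetrically $\gamma(G)\cdot\rk(H)\le 2\rk(G\cp H)$; in particular the conjecture already holds whenever one factor satisfies $\gamma=\rk$ (for $k=2$ exactly the graphs of Hartnell and Rall cited above). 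Moreover, combining these two mixed bounds algebraically does not help: multiplying them and taking square roots, or simply adding them, recovers the full statement only in the case $\gamma=\rk$ for both factors, since $\sqrt{\gamma(G)\rk(G)}\le\rk(G)$ with equality iff $\gamma(G)=\rk(G)$. Thus the entire content of the conjecture is the \emph{simultaneous upgrade} of both $\gamma$'s to $\rk$'s.

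Next I would attempt a direct Clark--Suen-style partition argument carried out with rainbow functions rather than through $K_k$. Fix a minimum $k$RDF $F$ of $G\cp H$ and, for each $h\in V(H)$, let $F_h$ be its restriction to the $G$-fibre over $h$. Choosing a dominating set $\{b_1,\dots,b_\ell\}$ of $H$ with $\ell=\gamma(H)$ and a cell partition $V(H)=\bigsqcup_i P_i$ with $P_i\subseteq N_H[b_i]$, one repairs each fibre function $F_{b_i}$ into a genuine $k$RDF $\widetilde F_{b_i}$ of $G$ by importing, at each empty vertex, the colours that were supplied vertically (from neighbouring fibres) rather than horizontally. The vertical import at fibre $b_i$ is paid for by the weight of $F$ on the fibres over $N_H(b_i)$, so that
$$\ell\cdot\rk(G)\le\sum_i\|\widetilde F_{b_i}\|\le\sum_i\|F_{b_i}\|+\sum_i\sum_{h'\in N_H(b_i)}\|F_{h'}\|,$$
which after the standard double count yields exactly the mixed bound $\gamma(H)\cdot\rk(G)\le 2\rk(G\cp H)$, and nothing stronger.

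The obstacle, which I expect to be the crux, is already visible in that computation: a two-factor partition argument can only charge a $\gamma$, not a $\rk$, to the coordinate playing the role of the cell index. Phrased on $G\cp H\cp K_k$, we have a single copy of $K_k$ to spend; placing it on the $H$-coordinate (so the cells carry weight $\rk(H)$) forces the projected factor to be plain $G$, and vice versa, which is precisely why the clean product argument of the preceding proposition needs a \emph{second} $K_k$ and then pays the factor $k$ to collapse it. To reach the constant $2$ one must reuse the same $K_k$ on both coordinates, and the only visible structure for this is the high symmetry of the complete graph: the action of the symmetric group on $[k]$ by colour permutations is a group of automorphisms of $G\cp H\cp K_k$. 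I would therefore try to symmetrise the fibre-repair construction over this action, hoping to convert the $\gamma(G)$ produced by the partition into the full $\rk(G)$ without duplicating the $K_k$ factor; making this averaging simultaneously valid and weight-efficient is the step I do not see how to force through, and is likely why the statement is posed as a conjecture. Failing a general argument, I would first verify the bound for bipartite $G$ and $H$ and for $k=2$ (where the link to Vizing's conjecture noted in the abstract can be exploited), and test small products in search of a possible counterexample.
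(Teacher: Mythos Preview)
The statement you were given is a \emph{conjecture} in the paper, not a theorem: the authors do not prove it and offer no proof sketch. What the paper does supply is evidence that the constant $2$ cannot be lowered, via the example $G=H=C_4$ with $k=4$, where $\gamma_{r4}(C_4)=4$ and $\gamma_{r4}(C_4\cp C_4)=8$. Your write-up is therefore not competing against any argument in the paper; it is an honest exploration that ends exactly where the paper does, namely at an open problem.

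Your analysis is sound as far as it goes. The reformulation through $\rk(G)=\gamma(G\cp K_k)$ is correct, and your observation that Clark--Suen with $X=G\cp K_k$, $Y=H$ yields only the mixed bound $\rk(G)\,\gamma(H)\le 2\,\rk(G\cp H)$ is exactly the obstruction: one copy of $K_k$ cannot be spent on both coordinates simultaneously, which is why the paper's preceding proposition incurs the extra factor $k$. Your partition/fibre-repair sketch likewise recovers only this mixed bound, and you are right to flag the ``simultaneous upgrade'' of both $\gamma$'s to $\rk$'s as the missing idea. That is precisely the gap the paper leaves open.

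One small correction: the Vizing link you invoke at the end (bipartite $G,H$, $k=2$) is established in the paper for the \emph{total} rainbow version, Conjecture~\ref{conA}, via Theorem~\ref{azar} on $\gamma_{2rt}$; it does not transfer directly to Conjecture~\ref{conB}. For $\rk$ there is no analogue of $\gamma_{2rt}(G)=2\gamma(G)$ for bipartite $G$, so that route does not give a special case of the present conjecture.
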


\begin{conjecture}\label{conA}
Let $G$ and $H$ be graphs and $k\geq 2$. Then 
 \begin{equation}\label{in.a}
     \krt(G) \, \cdot \, \krt(H) \le  2 \cdot \krt(G\cp H).
 \end{equation} 
\end{conjecture}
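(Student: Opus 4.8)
Since the statement is posed as a conjecture, I outline an attack rather than a finished argument. The natural first move is to translate (\ref{in.a}) into pure total domination via equation~(\ref{cpt}): writing $\krt(X)=\gt(X\cp K_k)$, the conjectured inequality becomes
$$\gt(G\cp K_k)\cdot\gt(H\cp K_k)\ \le\ 2\,\gt(G\cp H\cp K_k).$$
This form pinpoints exactly where the Proposition above loses its factor of $k$. Applying Ho's theorem to the pair $G\cp K_k$ and $H\cp K_k$ gives
$$\gt(G\cp K_k)\cdot\gt(H\cp K_k)\ \le\ 2\,\gt\bigl((G\cp K_k)\cp(H\cp K_k)\bigr)=2\,\gt(G\cp H\cp K_k\cp K_k),$$
which carries one copy of $K_k$ too many; collapsing that extra factor back to the target graph is precisely what costs the factor $k$, via the bound $\krt\le k\gt$ of Corollary~\ref{thm_rainbow_to_total}. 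So any proof attaining the constant $2$ must avoid introducing the second $K_k$ in the first place.

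A cleaner but still insufficient observation supports this diagnosis: Ho's theorem applied to the pair $G\cp K_k$ and $H$ (with $H$ having no isolated vertices) yields $\krt(G)\cdot\gt(H)\le 2\,\krt(G\cp H)$, and symmetrically $\gt(G)\cdot\krt(H)\le 2\,\krt(G\cp H)$. Thus \emph{one} factor can already be promoted from $\gt$ to $\krt$ at no cost; the whole content of the conjecture is the simultaneous promotion of \emph{both} factors, which the product $G\cp H\cp K_k$ — with its single $K_k$ fibre — appears structurally unable to furnish through any black-box product inequality.

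Accordingly, I would abandon the reduction and attempt a direct Clark--Suen-style projection argument adapted to $k$-rainbow total dominating functions, in the spirit of the proofs in \cite{CS2000} and \cite{Ho08}. Fix a minimum dominating set $\{h_1,\dots,h_p\}$ of $H$ together with a Voronoi partition $V(H)=\bigcup_i P_i$ with $h_i\in P_i\subseteq N_H[h_i]$. Take a $\krt$-function $f$ on $G\cp H$, restrict it to each slab $G\cp P_i$, and project the colour information onto the $G$-fibre over $h_i$. The aim is to assemble from these projections a legitimate $\krt$-function on $G$ whose weight is controlled by the portion of $\|f\|$ lying in that cell, so that averaging over the $\g(H)$ cells produces $\krt(G)\cdot\g(H)\le C\cdot\krt(G\cp H)$; one then invokes the analogous coordinate-swapped bound, or runs the Clark--Suen partition on both factors, to replace $\g(H)$ by $\krt(H)$ and fix $C=2$.

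The main obstacle will be preserving the two defining conditions of a $k$RTDF under projection. A fibre-vertex that becomes empty must still see all $k$ colours among its $G$-neighbours, and a projected singleton $\{i\}$ must still retain a $G$-neighbour carrying colour $i$; both can be destroyed when colours originally supplied by $H$-neighbours in other fibres are collapsed into a single $G$-fibre. The boundary vertices — those dominated across an $H$-edge from an adjacent cell — are where the factor $2$ rather than $1$ must enter, exactly as in Clark--Suen, and I expect the bookkeeping required to certify the \emph{total} condition at these boundary vertices to be the crux of the difficulty. Before investing in the full argument it would also be prudent to test the constant on small tight instances (for example $G=H=K^+_{k,b}$, which realises the coefficient $2$ in Lemma~\ref{thm_Kplus} and Conjecture~\ref{conj_2gammaLower}) to confirm that $2$ is genuinely the right constant and not an artifact of the looser bounds already established.
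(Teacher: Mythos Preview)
The statement is a \emph{conjecture}; the paper does not prove it and offers no proof sketch, so there is no paper argument to compare against. You correctly recognise this and present an outline of attack rather than a claimed proof, which is appropriate.

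Your diagnostic observations are sound. The translation via $\krt(X)=\gt(X\cp K_k)$ is exactly how the paper derives the weaker Proposition with constant $2k$, and you identify precisely where the extra copy of $K_k$ enters. The asymmetric consequence $\krt(G)\cdot\gt(H)\le 2\,\krt(G\cp H)$ that you extract from Ho's theorem is correct and is a genuine partial result not stated in the paper.

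There is, however, a calibration you are missing. Immediately after stating the conjecture the paper observes (via Theorem~\ref{azar}) that for $k=2$ and bipartite $G,H$, inequality~(\ref{in.a}) is \emph{equivalent} to Vizing's conjecture itself: since $\trt(G)=2\gamma(G)$ for bipartite $G$, the inequality $\trt(G)\,\trt(H)\le 2\,\trt(G\cp H)$ collapses to $\gamma(G)\,\gamma(H)\le\gamma(G\cp H)$. Thus any proof of Conjecture~\ref{conA} in full would in particular settle Vizing's conjecture for bipartite graphs, which is wide open. Your proposed Clark--Suen-style projection is precisely the technique that delivers the factor-$2$ \emph{relaxation} of Vizing, not Vizing; expecting the same machinery to yield the constant $2$ in (\ref{in.a}) --- which in the bipartite $k=2$ case is already unrelaxed Vizing --- is therefore over-optimistic. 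This is the paper's main point about the conjecture, and it explains why no proof is offered: the conjecture is meant to be at Vizing-level difficulty, not a technical refinement of Ho's argument.
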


For the first conjecture
the constant 2 is attained for $G=H=C_4$ and $k=4$. It is easy to see that $\gamma_{{\rm r}4}(C_4)=4$.
To see that $\gamma_{{\rm r}4}(C_4\cp C_4) = 8$, observe that the upper bound
 $\gamma_{{\rm r}4}(C_4\cp C_4)\leq 8$ follows from the construction in Figure \ref{c4c4}, while the lower bound 
$\gamma_{{\rm r}4}(C_4\cp C_4)\geq 8$ follows from Theorem~3 in \cite{shao14} 
(that theorem states that for a connected graph $G$, $\rk(G)\geq \Bigl\lceil \frac{|V(G)|k}{\Delta(G)+k}\Bigr\rceil$, where $\Delta(G)$ stands for the maximum degree of $G$).

\begin{figure}[ht!]
	\begin{center}
		\begin{tikzpicture}[scale=1,style=thick]

		\node [My Style, name=a]   at (0,0) {};
		\node [My Style, name=b, label=below left:$4$]   at (1,0) {};		
		\node [My Style, name=c]   at (2,0) {};	
		\node [My Style, name=d, label=below left:$3$]   at (3,0) {};
		
		\node [My Style, name=a1, label=below left:$2$]   at (0,1) {};
		\node [My Style, name=b1]   at (1,1) {};		
		\node [My Style, name=c1, label=below left:$1$]   at (2,1) {};	
		\node [My Style, name=d1]   at (3,1) {};
		
		\node [My Style, name=a2]   at (0,2) {};
		\node [My Style, name=b2, label=below left:$3$]   at (1,2) {};		
		\node [My Style, name=c2]   at (2,2) {};	
		\node [My Style, name=d2, label=below left:$4$]   at (3,2) {};
		
		\node [My Style, name=a3, label=below left:$1$]   at (0,3) {};
		\node [My Style, name=b3]   at (1,3) {};		
		\node [My Style, name=c3, label=below left:$2$]   at (2,3) {};	
		\node [My Style, name=d3]   at (3,3) {};
		
		%%% Draw edges	

		\draw[] (a)--(b)--(c)--(d);		
		\draw[] (a1)--(b1)--(c1)--(d1);
		\draw[] (a2)--(b2)--(c2)--(d2);
		\draw[] (a3)--(b3)--(c3)--(d3);
		
		\draw (a) .. controls (1,0.4) and (2,0.4) .. (d);
		\draw (a1) .. controls (1,1.4) and (2,1.4) .. (d1);		
		\draw (a2) .. controls (1,2.4) and (2,2.4) .. (d2);
		\draw (a3) .. controls (1,3.4) and (2,3.4) .. (d3);
		
		\draw[] (a)--(a1)--(a2)--(a3);		
		\draw[] (b)--(b1)--(b2)--(b3);
		\draw[] (c)--(c1)--(c2)--(c3);
		\draw[] (d)--(d1)--(d2)--(d3);
		
		\draw (a) .. controls (0.5,0.4) and (0.5,2.4) .. (a3);
		\draw (b) .. controls (1.5,0.4) and (1.5,2.4) .. (b3);
		\draw (c) .. controls (2.5,0.4) and (2.5,2.4) .. (c3);
		\draw (d) .. controls (3.5,0.4) and (3.5,2.4) .. (d3);
		
	\end{tikzpicture}
	\end{center}
	\caption{$4$-rainbow domination of $C_4 \cp C_4$.}
	\label{c4c4}
\end{figure}

Regarding Conjecture~\ref{conA}, if we exclude graphs containing isolated vertices and take $k=1$, then inequality~(\ref{in.a}) is just a restatement of Ho's result.  Furthermore, when $k=1$, Ho observes that the inequality is sharp.
When $k=2$ we obtain an interesting observation relating Vizing's conjecture to Conjecture~\ref{conA}.
Due to Theorem~\ref{azar}, if  $G$ and $H$ are bipartite graphs, then 
 $$4 \gamma(G) \gamma(H) = \trt(G) \trt(H) \quad\hbox{ and }\quad
   4\gamma(G\cp H) = 2 \trt (G\cp H),$$
which implies that for bipartite graphs, inequality (\ref{in.v}) holds if and only if for $k = 2$ inequality (\ref{in.a}) holds.
Furthermore, one inequality is tight if and only if the other is. 
The next observation summarizes this discussion.
   
\begin{observation}
Restricting to the case $k=2$ and to the class of bipartite graphs,  Conjecture~\ref{conA} and Vizing's conjecture are equivalent.
\end{observation}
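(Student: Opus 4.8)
The plan is to reduce both conjectures, under the stated restriction, to a single algebraic inequality by substituting the identity of Theorem~\ref{azar}. The only genuinely new ingredient needed is that the Cartesian product of two bipartite graphs is again bipartite, so that Theorem~\ref{azar} applies simultaneously to $G$, to $H$, and to $G\cp H$.

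First I would verify that $G\cp H$ is bipartite whenever $G$ and $H$ are. If $c_G:V(G)\to\{0,1\}$ and $c_H:V(H)\to\{0,1\}$ are proper $2$-colorings of the factors, then the map $(g,h)\mapsto c_G(g)+c_H(h)\pmod 2$ is a proper $2$-coloring of $G\cp H$: every edge of $G\cp H$ alters exactly one coordinate along a single edge of a factor, flipping that factor's color and hence flipping the parity of the sum. Thus $G\cp H$ is bipartite, and Theorem~\ref{azar} yields $\gamma_{2rt}(G\cp H)=2\gamma(G\cp H)$, alongside $\gamma_{2rt}(G)=2\gamma(G)$ and $\gamma_{2rt}(H)=2\gamma(H)$.

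Next I would substitute these three equalities into inequality~(\ref{in.a}) with $k=2$. The left side becomes $\gamma_{2rt}(G)\,\gamma_{2rt}(H)=4\gamma(G)\gamma(H)$ and the right side becomes $2\gamma_{2rt}(G\cp H)=4\gamma(G\cp H)$, so~(\ref{in.a}) for $k=2$ reads $4\gamma(G)\gamma(H)\le 4\gamma(G\cp H)$, which after dividing by $4$ is exactly Vizing's inequality~(\ref{in.v}). Since each of these three steps is an identity rather than a mere inequality, the chain is reversible: for a fixed bipartite pair $G,H$, inequality~(\ref{in.a}) holds if and only if~(\ref{in.v}) does, and equality in one forces equality in the other.

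I do not expect a real obstacle here. The only step beyond direct substitution is the bipartiteness of the product, which is standard, and the remaining argument is essentially the computation already displayed just before the observation. The one point worth stating carefully is that the equivalence is established per pair of graphs and that the three relations invoked are \emph{equalities}; this is precisely what allows one to transfer both the inequality and its sharpness in both directions, yielding the claimed equivalence of Conjecture~\ref{conA} and Vizing's conjecture in the restricted setting.
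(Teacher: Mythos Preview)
Your argument is correct and follows essentially the same route as the paper: apply Theorem~\ref{azar} to $G$, $H$, and $G\cp H$ and observe that the resulting equalities turn inequality~(\ref{in.a}) for $k=2$ into inequality~(\ref{in.v}) and vice versa. Your explicit verification that $G\cp H$ is bipartite is a welcome addition, since the paper uses this fact tacitly when invoking Theorem~\ref{azar} on the product.
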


\noindent
The relevance of the last observation is highlighted by the fact that there are
many pairs of graphs for which equality is achieved in Vizing's conjecture (see \cite{hart98}).

%%%%%%%%%%%%%%%%%%%%%%%
%%%%%%%%%%%%%%%%%%%%%%%

\vskip 1pc \noindent{\bf Acknowledgments.} 
Slovenian researchers were partially supported by Slovenian research agency
ARRS, program no.\ P1--0383, project no.\ J1--1692 and bilateral projects between Slovenia and United states of America BI-US/18-20-061 and BI-US/18-20-052.

\end{document}